\newcommand{\Z}{\mathbb{Z}}
\newcommand{\solid}{{\rule[0.36pt]{3.5pt}{3.5pt}}}
\newcommand{\ev}{\operatorname{ev}}
\newcommand{\suchthat}{\ |\ }
\newtheorem{notation}[theorem]{Notation}
\newtheorem{construction}[theorem]{Construction}
\definecolor{keywordcolor}{rgb}{0.7, 0.1, 0.1}   
\definecolor{tacticcolor}{rgb}{0.0, 0.1, 0.6}    
\definecolor{commentcolor}{rgb}{0.4, 0.4, 0.4}   
\definecolor{symbolcolor}{rgb}{0.0, 0.1, 0.6}    
\definecolor{sortcolor}{rgb}{0.1, 0.5, 0.1}      
\definecolor{attributecolor}{rgb}{0.7, 0.1, 0.1} 
\newcommand{\lean}[1]{\lstinline{#1}\xspace} 
\title{Towards solid abelian groups: \\ A formal proof of Nöbeling's theorem}
\titlerunning{Towards solid abelian groups: A formal proof of Nöbeling's theorem}
\author{Dagur Asgeirsson}{University of Copenhagen, Denmark \and \url{http://dagur.sites.ku.dk}}{dagur@math.ku.dk}{https://orcid.org/0000-0003-3002-0320}{The author was supported by the Danish National Research Foundation (DNRF) through the ``Copenhagen Center for Geometry and Topology'' under grant no. DNRF151.}
\authorrunning{D. Asgeirsson}
\keywords{Condensed mathematics, Nöbeling's theorem, Lean, Mathlib, Interactive theorem proving} 
\begin{document}

\maketitle

\begin{abstract}
    Condensed mathematics, developed by Clausen and Scholze over the last few years, is a new way of studying the interplay between algebra and geometry. It replaces the concept of a topological space by a more sophisticated but better-behaved idea, namely that of a condensed set. Central to the theory are solid abelian groups and liquid vector spaces, analogues of complete topological groups.

    Nöbeling's theorem, a surprising result from the 1960s about the structure of the abelian group of continuous maps from a profinite space to the integers, is a crucial ingredient in the theory of solid abelian groups; without it one cannot give any nonzero examples of solid abelian groups. We discuss a recently completed formalisation of this result in the Lean theorem prover, and give a more detailed proof than those previously available in the literature. The proof is somewhat unusual in that it requires induction over ordinals -- a technique which has not previously been used to a great extent in formalised mathematics.
\end{abstract}

\section{Introduction}

Nöbeling's theorem says that the abelian group $C(S,\Z)$ of continuous maps from a profinite space $S$ to the integers, is a free abelian group. In fact, the original statement \cite[Satz 1]{nobeling} is the more general result that bounded maps from any set to the integers form a free abelian group, but this special case has recently been applied \cite[Theorem 5.4]{condensed} in the new field of condensed mathematics (see also \cite{analytic, complex, pyknotic}). 

We report on a recently completed formalisation of this theorem using the Lean 4 theorem prover \cite{lean4}, building on its \emph{Mathlib} library of formalised mathematics (which was recently ported from Lean 3, see \cite{lean3, mathlib}). The proof uses the well-ordering principle and a tricky induction over ordinals. This is the first use of the induction principle for ordinals in Mathlib outside the directory containing the theory of ordinals. Often, one can replace such transfinite constructions by appeals to Zorn's lemma. The author is not aware of any proof of Nöbeling's theorem that does this, or otherwise avoids induction over ordinals\footnote{The proof does not use any ordinal arithmetic. However, it crucially uses the principle of induction over ordinals with a case split between limit ordinals and successor ordinals.}. 

When formalising a nontrivial proof, one inevitably makes an effort to organise the argument carefully. One purpose of this paper is to give a well-organised and detailed proof of Nöbeling's theorem, written in conventional mathematical language, which is essentially a by-product of the formalisation effort. This will hopefully be a more accessible proof than those that already exist in the literature; the one in \cite{nobeling} is in German, while the proofs of the result in \cite{fuchs, condensed} are the same argument as the one presented here, but in significantly less detail. This is the content of section \ref{sec:theorem}; some mathematical prerequisites are found in section \ref{sec:preliminaries}. 

In section \ref{sec:motivation} we give more details about the connection to condensed mathematics and in sections \ref{sec:formalisation}  and \ref{sec:condensed_mathlib} we discuss the formalisation process and the integration into Mathlib. 

Throughout the text, we use the symbol ``\faExternalLink*'' for external links, usually directly to the source code for the corresponding theorems and definitions in Mathlib. In order for the links to stay usable, they are all to a fixed commit to the master branch (the most recent one at the time of writing). 

Mathlib is a growing library of mathematics formalised in Lean. All material is maintained continuously by a team of experts. There is a big emphasis on unity, meaning that there is \emph{one} official definition of every concept, and it is the job of contributors to provide proofs that alternative definitions are equivalent. All the code in this project has been integrated into Mathlib; a process that took quite some time, as high standards are demanded of code that enters the library. However, it is an important part of formalisation to get the code into Mathlib, because doing so means that it stays usable to others in the future. 

\section{Motivation}\label{sec:motivation}

Condensed mathematics \cite{condensed, analytic, complex} is a new theory developed by Clausen and Scholze (and independently by Barwick and Haine, who called the theory \emph{pyknotic sets} \cite{pyknotic}). It has the purpose of generalising topology in a way that gives better categorical properties, which is desirable e.g. when the objects have both a topological and an algebraic structure. Condensed objects\footnote{This notion was first formalised in the \emph{Liquid Tensor Experiment} \cite{LTE, LTE_challenge}, see section \ref{sec:condensed_mathlib} for a more detailed discussion.} can be described as sheaves on a certain site of profinite spaces. A topological abelian group $A$ can be regarded as a condensed abelian group with $S$-valued points $C(S,A)$ for profinite spaces $S$.  Discrete abelian groups such as $\Z$ are important examples of topological abelian groups. There is a useful characterisation of discrete condensed sets (which leads to the same characterisation for more general condensed objects such as condensed abelian groups), which has been formalised in Lean 3 by the author in \cite{discrete}. 

The discreteness characterisation can be stated somewhat informally as follows: A condensed set $X$ is discrete if and only if for every profinite space $S = \varprojlim_i S_i$ (written as a cofiltered limit of finite discrete spaces), the natural map 
\[
    \varinjlim_i X(S_i) \to X(S)    
\]
is an isomorphism. 

There is a notion of completeness of condensed abelian groups, called being \emph{solid} \cite[Definition 5.1]{condensed}. For the convenience of the reader, we give the informal definition here in Definition \ref{def:solid}. First, we need to recall two facts about condensed abelian groups:
\begin{itemize}
    \item The category of condensed abelian groups has all limits.
    \item The forgetful functor from condensed abelian groups to condensed sets has a left adjoint, denoted by $\Z[-]$ (adopted from the analogous relationship between the category of sets and the category of abelian groups). 
\end{itemize}

\begin{definition}\label{def:solid}
    Let $S = \varprojlim_i S_i$ be a profinite space and define a condensed abelian group as follows:
    \[
        \Z[S]^\solid := \varprojlim_i \Z[S_i]
    \]
    There is a natural map $\Z[S] \to \Z[S]^\solid$, and we say that a condensed abelian group $A$ is \emph{solid} if for every profinite space $S$ and every morphism $f: \Z[S] \to A$ of condensed abelian groups, there is a unique morphism $g : \Z[S]^\solid \to A$ making the obvious triangle commute\footnote{This definition has also been formalised by the author in Lean 3 in \cite{solid}}.
\end{definition}

Using the discreteness characterisation and Nöbeling's theorem, one can prove that for every profinite space $S$, there is a set $I$ and an isomorphism of condensed abelian groups 
\[
    \Z[S]^\solid \cong  \prod_{i \in I} \Z.
\]
This structural result is essential to developing the theory of solid abelian groups. Without it one cannot even prove the existence of a nontrivial solid abelian group.

Since the proof of Nöbeling's theorem has nothing to do with condensed mathematics, people studying the theory might be tempted to skip the proof and use Nöbeling's theorem as a black box. Now that it has been formalised, they can do this with a better conscience. On the other hand, people interested in understanding the proof might want to turn to sections \ref{sec:preliminaries} and \ref{sec:theorem} of this paper for a more detailed account.

\section{Preliminaries}\label{sec:preliminaries}
For ease of reference, we collect in this section some prerequisites for the proof of Nöbeling's theorem. Most of them were already in Mathlib. 

\subsection{Order theory}

\begin{definition}\href{https://github.com/leanprover-community/mathlib4/blob/ba9f2e5baab51310883778e1ea3b48772581521c/Mathlib/Order/Directed.lean#L43-L44}{\faExternalLink*}
    Let $I$ and $X$ be sets and let $r$ be a binary relation on $X$. An $I$-indexed family $(x_i)$ in $X$ is \emph{directed} if for all $i, j \in I$, there exists $k \in I$ such that $r(x_i, x_k)$ and $r(x_j,x_k)$.  
\end{definition}

\begin{lemma}\label{directed_of_sup} \href{https://github.com/leanprover-community/mathlib4/blob/ba9f2e5baab51310883778e1ea3b48772581521c/Mathlib/Order/Directed.lean#L174-L177}{\faExternalLink*}
    A monotone map on a poset with a join operation (i.e. a least upper bound of two elements) is directed. 
\end{lemma}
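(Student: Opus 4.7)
The plan is to reduce directedness directly to the join operation on the poset. Suppose $P$ is the poset (playing the role of the indexing set $I$) equipped with a join $\vee$, and let $f : P \to X$ be a monotone map, where $X$ carries a binary relation $r$ which, since $f$ is called ``monotone'', should be read as the order $\le$ on $X$ (or at least as a relation for which $a \le b$ in $P$ implies $r(f(a), f(b))$).

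Given arbitrary indices $i, j \in P$, the natural candidate for the witness $k$ of directedness is simply the join $k := i \vee j$. By the defining property of the join, $i \le k$ and $j \le k$ in $P$. Monotonicity of $f$ then yields $r(f(i), f(k))$ and $r(f(j), f(k))$, which is exactly the condition that the family $(f(i))_{i \in P}$ is directed with respect to $r$.

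There is essentially no obstacle here; the lemma is a one-line unfolding of definitions once one chooses $k = i \vee j$. The only mild subtlety in a formalisation is matching the precise typeclass hypotheses (the presence of a binary join, and the sense in which ``monotone'' is phrased relative to the relation $r$ on the codomain), but mathematically the argument is immediate from the universal property of $\vee$ together with monotonicity.
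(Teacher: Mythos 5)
Your proof is correct, and it is the obvious one-liner: take $k = i \vee j$, use the universal property of the join to get $i \le k$ and $j \le k$, and apply monotonicity to transport these to $r(f(i),f(k))$ and $r(f(j),f(k))$. The paper does not include a proof of this lemma (it is a preliminary cited directly from Mathlib), but your argument is exactly the one used there, so there is nothing to compare or flag.
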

\begin{remark}
    Taking the union of two sets is an example of a join operation.
\end{remark}

\begin{definition}\href{https://github.com/leanprover-community/mathlib4/blob/ba9f2e5baab51310883778e1ea3b48772581521c/Mathlib/CategoryTheory/Filtered/Basic.lean#L63-L86}{\faExternalLink*}
    A category $\mathcal{C}$ is \emph{filtered} if it satisfies the following three conditions 
    \begin{enumerate}[(i)]
        \item $\mathcal{C}$ is nonempty.
        \item For all objects $X,Y$, there exists an object $Z$ and morphisms $f : X \to Z$ and $g : Y \to Z$.
        \item For all objects $X,Y$ and all morphisms $f,g : X \to Y$, there exists an object $Z$ and a morphism $h : Y \to Z$ such that $h \circ f = h \circ g$. 
    \end{enumerate}
    A category is \emph{cofiltered} if the opposite category is filtered.
\end{definition}

\begin{remark}
    A poset is filtered if and only if it is nonempty and directed.
\end{remark}

\begin{remark}
    The poset of finite subsets of a given set is filtered.
\end{remark}

\subsection{Linear Independence}

\begin{lemma}\label{linearIndependent_iUnion_of_directed}\href{https://github.com/leanprover-community/mathlib4/blob/ba9f2e5baab51310883778e1ea3b48772581521c/Mathlib/LinearAlgebra/LinearIndependent.lean#L517-L528}{\faExternalLink*}
    If $(X_i)$ is a family of linearly independent subsets of a module over a ring $R$, which is directed with respect to the subset relation, then its union is linearly independent.
\end{lemma}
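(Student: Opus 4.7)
The plan is to exploit the finitary character of linear independence: a subset of a module is linearly independent if and only if every finite subset of it is. So it suffices to show that any finite subset $F$ of $\bigcup_i X_i$ is linearly independent.

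The central step is to show that such an $F$ lies entirely inside a single $X_k$. I will prove this by induction on $|F|$. The empty case is trivial. For the inductive step, write $F = F' \cup \{x\}$; the induction hypothesis applied to $F'$ provides $j$ with $F' \subseteq X_j$, and since $x \in \bigcup_i X_i$ there is some $\ell$ with $x \in X_\ell$. Directedness then yields $m$ with $X_j \subseteq X_m$ and $X_\ell \subseteq X_m$, so $F \subseteq X_m$, as required.

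Once $F \subseteq X_k$ has been established and $X_k$ is linearly independent by hypothesis, $F$ inherits linear independence, and hence so does the whole union.

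The hard part will not be mathematical but a matter of bookkeeping. In Mathlib, linear independence is typically phrased via finitely supported coefficient functions on the indexing type rather than on the subset itself, so one must reformulate the finitary characterisation of linear independence for subsets and carefully carry the finite support through the containment $F \subseteq X_k$. Some care is also needed with the degenerate cases (empty $I$, empty $F$) to ensure that the directedness hypothesis is never invoked on an empty index set.
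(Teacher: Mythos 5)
Your proof is correct and matches the approach taken in the Mathlib proof that the paper cites: reduce to finite subsets via the finitary characterisation of linear independence, use directedness to trap any finite subset of the union inside a single $X_k$, and conclude from the linear independence of $X_k$. Your remarks about the empty-index degeneracy and the bookkeeping of finitely supported coefficients are exactly the issues the formal proof has to handle, so there is nothing missing.
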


\begin{lemma}\label{linearIndependent_leftExact}\href{https://github.com/leanprover-community/mathlib4/blob/ba9f2e5baab51310883778e1ea3b48772581521c/Mathlib/Algebra/Category/ModuleCat/Free.lean#L53-L70}{\faExternalLink*}
    Suppose we have a commutative diagram 
    \[
    \begin{tikzcd}
        0 & N & M & P \\
        & I & {I \sqcup J} & J
        \arrow[from=1-1, to=1-2]
        \arrow["f", from=1-2, to=1-3]
        \arrow["g", from=1-3, to=1-4]
        \arrow[hook, from=2-2, to=2-3]
        \arrow[hook', from=2-4, to=2-3]
        \arrow["v"{description}, from=2-2, to=1-2]
        \arrow["u"{description}, from=2-3, to=1-3]
        \arrow["w"{description}, from=2-4, to=1-4]
    \end{tikzcd}\]
    where $N,M,P$ are modules over a ring $R$, the top row is exact, and the bottom maps are the inclusion maps. If $v$ and $w$ are linearly independent, then $u$ is linearly independent. 
\end{lemma}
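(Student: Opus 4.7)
The plan is to take an arbitrary finitely supported linear relation among the family $u$, split the index set as $I \sqcup J$, and use exactness of the top row together with the given linear independences of $v$ and $w$ to force all coefficients to vanish. The key observations extracted from commutativity of the diagram are that $u_i = f(v_i)$ for $i \in I$ (left square) and $g(u_j) = w_j$ for $j \in J$ (right square), while exactness gives both $g \circ f = 0$ at $M$ and injectivity of $f$ at $N$.

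Concretely, suppose $\sum_{k \in I \sqcup J} c_k u_k = 0$ is a finitely supported relation, and split it as
\[
    \sum_{i \in I} c_i u_i + \sum_{j \in J} c_j u_j = 0.
\]
Applying $g$ annihilates the first sum, since $g(u_i) = g(f(v_i)) = 0$ by exactness at $M$, and transforms the second sum into $\sum_{j \in J} c_j w_j$ by commutativity of the right square. Thus $\sum_{j \in J} c_j w_j = 0$, and linear independence of $w$ gives $c_j = 0$ for every $j \in J$.

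The original relation therefore collapses to $\sum_{i \in I} c_i u_i = 0$, which by commutativity of the left square is $f\bigl(\sum_{i \in I} c_i v_i\bigr) = 0$. Injectivity of $f$ (exactness at $N$) yields $\sum_{i \in I} c_i v_i = 0$, and linear independence of $v$ gives $c_i = 0$ for every $i \in I$. Hence every linear relation on $u$ is trivial, which is the desired conclusion.

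There is no real obstacle here beyond bookkeeping: one must partition the support of a finitely supported function on $I \sqcup J$ into its $I$-part and $J$-part, and invoke exactness in the two correct places. In a formalisation the only mild subtlety is that ``linear independence of a map'' is really linear independence of the indexed family, so one has to translate between the diagrammatic picture and the standard definition of \emph{LinearIndependent} on a family before the argument above goes through verbatim.
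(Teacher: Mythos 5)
Your proof is correct and is the standard argument; the paper does not reproduce a proof of this lemma (it is cited from Mathlib), and the Mathlib source proceeds exactly as you describe: apply $g$ to annihilate the $I$-indexed part and conclude $c_j = 0$ from independence of $w$, then use injectivity of $f$ to reduce to a relation on $v$ and conclude $c_i = 0$. Incidentally, your argument uses only that $f$ is injective and $g \circ f = 0$, not full exactness at $M$, so it in fact establishes a marginally more general statement.
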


\subsection{Cantor's intersection theorem}

\begin{theorem}\label{cantor_inter}\href{https://github.com/leanprover-community/mathlib4/blob/ba9f2e5baab51310883778e1ea3b48772581521c/Mathlib/Topology/Compactness/Compact.lean#L289-L303}{\faExternalLink*}
    \emph{Cantor's intersection theorem}. If $(Z_i)_{i \in I}$ is a nonempty family of nonempty, closed and compact subsets of a topological space $X$, which is directed with respect to the \emph{superset} relation $(V, W) \mapsto V \supseteq W$, then the intersection $\bigcap_{i \in I} Z_i$ is nonempty.
\end{theorem}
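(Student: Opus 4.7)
The plan is to reduce the problem to the finite intersection property characterisation of compactness, using directedness to verify the finite intersection condition. Concretely, I would argue by contradiction and pick a distinguished index to work inside a single compact set.

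First, since the family is nonempty, fix some $i_0 \in I$. Suppose for contradiction that $\bigcap_{i \in I} Z_i = \emptyset$. Then in particular every point of $Z_{i_0}$ fails to lie in some $Z_i$, so the family $\{X \setminus Z_i\}_{i \in I}$ of open sets (open because each $Z_i$ is closed) covers $Z_{i_0}$. Since $Z_{i_0}$ is compact, I extract a finite subcover indexed by $i_1, \ldots, i_n$, which translates to the statement $Z_{i_0} \cap Z_{i_1} \cap \cdots \cap Z_{i_n} = \emptyset$.

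Next I would invoke directedness with respect to the superset relation, iterated a finite number of times, to produce a single index $k \in I$ such that $Z_k \subseteq Z_{i_j}$ for every $j = 0, 1, \ldots, n$. Then $Z_k \subseteq Z_{i_0} \cap \cdots \cap Z_{i_n} = \emptyset$, contradicting the assumption that each $Z_i$ is nonempty.

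The only step requiring a bit of care is the iterated use of directedness to produce a common lower bound for a finite collection of indices; this is a routine induction on the size of the finite subset. The rest is essentially just the standard passage between open covers and families with the finite intersection property. In a formalised setting, the main nuance is making sure the compactness hypothesis is applied to $Z_{i_0}$ viewed as a compact subset of $X$ (rather than a compact space in its own right), so that the open cover $\{X \setminus Z_i\}$ lives in $X$ and the finite subcover is extracted correctly; Mathlib's \lstinline{IsCompact.elim_finite_subcover} is the natural lemma to cite here.
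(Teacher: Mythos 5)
Your proof is correct and is exactly the argument the paper has in mind: the paper does not write out a proof of this theorem (it cites the Mathlib formalisation and remarks only that the directed case follows by ``slightly modifying the standard proof'' for nested sequences), and that modification is precisely what you carry out — replacing the use of the nested ordering by an iterated application of directedness to produce a common lower bound $Z_k$ for the finitely many indices arising from the finite subcover. The structure (fix one compact $Z_{i_0}$, cover it by the open complements $X \setminus Z_i$, extract a finite subcover, then use directedness and nonemptiness of $Z_k$ to derive the contradiction) is the same as Mathlib's, and your closing remark about applying compactness to $Z_{i_0}$ as a compact subset of $X$ via \lstinline{IsCompact.elim_finite_subcover} is the right way to set it up formally.
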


\begin{remark}
    Cantor's intersection theorem is often stated only for the special case of decreasing nested sequences of nonempty compact, closed subsets. The generalisation above can be proved by slightly modifying the standard proof of that special case. 
\end{remark}

\subsection{Cofiltered limits of profinite spaces}

\begin{definition}\href{https://github.com/leanprover-community/mathlib4/blob/ba9f2e5baab51310883778e1ea3b48772581521c/Mathlib/Topology/Category/Profinite/Basic.lean#L46-L51}{\faExternalLink*}
    A \emph{profinite space} is a totally disconnected compact Hausdorff space.
\end{definition}

\begin{lemma}\href{https://github.com/leanprover-community/mathlib4/blob/ba9f2e5baab51310883778e1ea3b48772581521c/Mathlib/Topology/Separation.lean#L2338-L2344}{\faExternalLink*}
    Every profinite space has a basis of clopen subsets.
\end{lemma}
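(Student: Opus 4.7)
The plan is to prove the local statement: for every point $x$ in a profinite space $X$ and every open neighbourhood $U$ of $x$, there exists a clopen set $V$ with $x \in V \subseteq U$. This suffices to exhibit a basis of clopens.

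The central ingredient is the classical fact that in a compact Hausdorff space, the connected component of a point $x$ coincides with its \emph{quasi-component}, i.e.\ the intersection $Q_x := \bigcap\{C \subseteq X : C \text{ is clopen and } x \in C\}$. I would assume this lemma as a preliminary (it is already present in Mathlib). Combined with total disconnectedness, this gives $Q_x = \{x\}$, which is the crucial input.

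Given $x \in U$ with $U$ open, let $K := X \setminus U$, which is closed, hence compact. Since $Q_x = \{x\}$, for each $y \in K$ there is a clopen set $C_y$ with $x \in C_y$ and $y \notin C_y$, so $y \in X \setminus C_y$, which is open. The family $\{X \setminus C_y\}_{y \in K}$ is thus an open cover of $K$, so by compactness we extract a finite subcover indexed by $y_1, \ldots, y_n$. Then
\[
V := C_{y_1} \cap \cdots \cap C_{y_n}
\]
is a finite intersection of clopens, hence clopen; it contains $x$ by construction; and it is contained in $U$ because its complement $\bigcup_i (X \setminus C_{y_i})$ covers $K = X \setminus U$.

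The only mildly delicate point is the preliminary lemma $Q_x = \{x\}$ in a compact Hausdorff totally disconnected space; the rest is a straightforward compactness argument. I expect this to be the main obstacle only in the sense of locating the right API in Mathlib — the structural idea is standard, and once the connected-component / quasi-component identification is available, the construction of $V$ above is direct.
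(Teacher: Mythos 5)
Your proposal is correct and is the standard argument, which is exactly the one implemented in Mathlib at the location the paper links to: reduce to the identification of connected components with quasi-components in a compact Hausdorff space, then separate a point from a compact complement by a finite intersection of clopens. The paper itself gives no proof for this preliminary lemma (it is cited directly from Mathlib), so there is nothing further to compare against.
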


\begin{lemma}\href{https://github.com/leanprover-community/mathlib4/blob/ba9f2e5baab51310883778e1ea3b48772581521c/Mathlib/Topology/Separation.lean#L2293-L2305}{\faExternalLink*}
    Every profinite space is totally separated, i.e. any two distinct points can be separated by clopen neighbourhoods. 
\end{lemma}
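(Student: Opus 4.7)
The plan is to derive this directly from the preceding lemma (profinite spaces have a basis of clopens) together with the Hausdorff property, which is part of the definition of a profinite space. Given two distinct points $x,y$ in a profinite space $X$, the goal is to produce a clopen set containing $x$ but not $y$; complementing then yields the two disjoint clopen neighbourhoods required by total separation.

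First I would use that $X$ is Hausdorff to select disjoint open sets $U_x \ni x$ and $U_y \ni y$. Next, since the clopen sets form a basis for the topology, I can refine $U_x$ to a basic clopen neighbourhood $V$ of $x$ with $x \in V \subseteq U_x$. Because $V \subseteq U_x$ and $U_x \cap U_y = \emptyset$, we have $y \notin V$. So $V$ is clopen, contains $x$, and misses $y$; its complement $X \setminus V$ is also clopen, contains $y$, and together with $V$ gives a partition of $X$ into two disjoint clopen neighbourhoods of $x$ and $y$ respectively, which is exactly the statement of total separation.

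There is no real obstacle here: the argument is essentially a two-line deduction from the preceding lemma and Hausdorffness. The only minor subtlety at the formalisation level is making sure the definition of \emph{totally separated} used in Mathlib matches the informal phrasing (existence of a separating clopen, versus existence of a partition into two open sets), but these are immediately interchangeable by taking complements of clopen sets. No use of compactness or total disconnectedness beyond what is already packaged into the preceding basis lemma is needed for this step.
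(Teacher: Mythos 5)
The paper itself offers no proof of this lemma---it is listed in the Preliminaries section with a bare citation to Mathlib---so there is no ``paper's own proof'' to compare against. That said, your argument deserves a closer look because it is subtly circular as a development strategy.

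Your deduction is logically valid \emph{given} the basis-of-clopens lemma: if clopens form a basis, then for $x \neq y$ one picks any open $U \ni x$ missing $y$ (here you only need $T_1$, not full Hausdorffness) and refines it to a clopen $V$, and $V$, $X \setminus V$ do the job. The problem is that this pushes all the content into the basis lemma, and in Mathlib---as the line numbers in the two \faExternalLink* links reveal (totally separated at L2293--2305, basis of clopens at L2338--2344)---the basis-of-clopens result is proved \emph{after}, and derived \emph{from}, the totally-separated result, not the other way around. The same is true in essentially every textbook treatment. Proving a basis of clopens directly would require, for each open $U \ni x$, covering the compact set $X \setminus U$ by clopens missing $x$ and extracting a finite subcover; but producing those clopens missing $x$ in the first place \emph{is} the totally-separated statement. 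So you have assumed the conclusion in a disguised form.

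The genuine work, which your proposal never engages, is the compactness argument showing that in a compact Hausdorff space the quasi-component of a point (the intersection of all clopens containing it) coincides with its connected component. Total disconnectedness then forces quasi-components to be singletons, which is exactly total separation; the basis of clopens is a corollary afterwards, by the finite-subcover argument sketched above. Your write-up explicitly claims ``no use of compactness \ldots\ is needed for this step,'' which is the tell: compactness is essential here, and the fact that you never touch it is the sign that the heavy lifting has been relocated into a lemma that, in the intended order of development, is not yet available.
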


\begin{remark}
    A topological space is profinite if and only if it can be written as a cofiltered limit of finite discrete spaces. See section \ref{sec:condensed_mathlib} for a further discussion.
\end{remark}

\begin{lemma}\label{jointly_surjective}\href{https://github.com/leanprover-community/mathlib4/blob/ba9f2e5baab51310883778e1ea3b48772581521c/Mathlib/Topology/Category/Profinite/CofilteredLimit.lean#L211-L250}{\faExternalLink*}
    Any continuous map from a cofiltered limit of profinite spaces to a discrete space factors through one of the components. 
\end{lemma}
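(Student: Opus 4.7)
The plan is to exploit the compactness of $S$ to reduce $f$ to a finite-valued map, and then to use the fact that every clopen in a cofiltered limit of profinite spaces comes from a clopen at some finite stage.

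First I would note that since $S$ is compact and $D$ is discrete, the image $f(S)$ is compact in $D$, hence finite: write it as $\{d_1,\dots,d_n\}$. Because $D$ is discrete, each preimage $U_k := f^{-1}(d_k)$ is clopen, and the $U_k$ form a finite clopen partition of $S$. Constructing a factorisation $f = g \circ \pi_i$ through some component $S_i$ is then equivalent to pulling back this partition to $S_i$.

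The key step is to show that each $U_k$ has the form $\pi_i^{-1}(V_k)$ for some clopen $V_k \subseteq S_i$, at a single common index $i$. Using the clopen basis of $S$ inherited from the clopen bases of the $S_i$ via the projections, every clopen in $S$ is a finite union of basic clopens of the form $\pi_j^{-1}(V)$ with $V \subseteq S_j$ clopen (here compactness of $U_k$ and $S \setminus U_k$ is crucial). By cofilteredness of the indexing category, finitely many such basic clopens can be pulled back to a common index, so there is a single $i$ and clopens $V_k \subseteq S_i$ with $U_k = \pi_i^{-1}(V_k)$ for all $k$.

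Now the $V_k$ need not be pairwise disjoint nor cover $S_i$ on the nose; only their pullbacks to $S$ do. To fix this I would use the standard property of cofiltered limits of compact Hausdorff spaces that a clopen $W \subseteq S_i$ satisfies $\pi_i^{-1}(W) = \emptyset$ in $S$ if and only if $\pi_{i,j}^{-1}(W) = \emptyset$ in $S_j$ for some $j$ refining $i$ in the indexing category (this is where Cantor's intersection theorem, Theorem \ref{cantor_inter}, enters). Applying this finitely many times, to each pairwise intersection $V_k \cap V_l$ with $k \neq l$ and to the complement $S_i \setminus \bigcup_k V_k$, and then choosing a common refinement $j$ via cofilteredness, yields an index at which the pulled-back clopens form an actual clopen partition of $S_j$. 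Defining $g \colon S_j \to D$ to take the constant value $d_k$ on the $k$-th piece is continuous (preimages of points are clopen) and satisfies $f = g \circ \pi_j$ by construction. The main obstacle I anticipate is the bookkeeping for the final common-refinement step: the passage from ``pullbacks agree in $S$'' to ``clopens already agree at a finite stage'' is the only genuinely non-formal ingredient, and careful use of cofilteredness is needed to handle all the auxiliary clopens simultaneously.
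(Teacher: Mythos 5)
The paper does not prove this lemma itself; it is cited as a prerequisite, with the proof residing in Mathlib. Your argument is mathematically correct, and its first three steps --- finiteness of $f(S)$ by compactness, descent of each clopen fibre $U_k$ to a clopen $V_k$ at some stage, and cofilteredness to find a common index $i$ with $U_k = \pi_i^{-1}(V_k)$ --- are exactly the essential content. The final fix-up via Cantor's intersection theorem, however, is an unnecessary detour. Once you have $U_k = \pi_i^{-1}(V_k)$ for a common $i$, you can define $g \colon S_i \to D$ directly by $g(y) = d_k$ for the least $k$ with $y \in V_k$, and let $g$ take an arbitrary default value where no such $k$ exists. Each fibre of $g$ is a boolean combination of the clopens $V_k$, hence clopen, so $g$ is continuous. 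And $g \circ \pi_i = f$: for $x \in S$, the point $\pi_i(x)$ lies in at least one $V_k$ (the $U_k$ cover $S$) and in at most one (since $\pi_i^{-1}(V_k \cap V_l) = U_k \cap U_l = \emptyset$ for $k \neq l$), so the least index is the only index and gives the right value. The overlaps $V_k \cap V_l$ and the uncovered region $S_i \setminus \bigcup_k V_k$ are disjoint from $\pi_i(S)$, so $g$'s behaviour there is immaterial to the factorisation. This eliminates the compactness argument and the common-refinement bookkeeping you flag as the main obstacle.
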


\begin{remark}
    In particular, a continuous map from a profinite space 
    \[
        S = \varprojlim_i S_i
    \] 
    to a discrete space factors through one of the finite quotients $S_i$. 
\end{remark}

\section{The theorem}\label{sec:theorem}
This section is devoted to proving

\begin{theorem}\label{Nobeling}\href{https://github.com/leanprover-community/mathlib4/blob/ba9f2e5baab51310883778e1ea3b48772581521c/Mathlib/Topology/Category/Profinite/Nobeling.lean#L1827-L1832}{\faExternalLink*}
    (\textit{Nöbeling's theorem}). Let $S$ be a profinite space. Then the abelian group $C(S, \Z)$ of continuous maps from $S$ to $\Z$ is free.
\end{theorem}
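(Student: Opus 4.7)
The plan is to reduce to the case where $S$ is a closed subspace of a Cantor cube $\{0,1\}^I$ (using the clopen basis and total separation of profinite spaces to embed $S$ into a power of $\{0,1\}$), fix a well-ordering on $I$, and then run a transfinite induction on ordinals $\mu$ bounded by the order type of $I$. For a closed subspace $T\subseteq\{0,1\}^I$, write $T_\mu$ for the projection of $T$ onto the first $\mu$ coordinates; each $T_\mu$ is profinite and one recovers $T$ at the top stage. For every finite subset $J$ of the first $\mu$ indices there is a ``monomial'' $e_J:T_\mu\to\mathbb{Z}$ obtained by multiplying the relevant coordinate evaluations. A distinguished subset of these, the \emph{good} products, is singled out by a minimality condition on $J$ relative to the points of $T_\mu$; write $B_\mu(T)$ for this set. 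The inductive claim is that $B_\mu(T)$ is a $\mathbb{Z}$-basis of $C(T_\mu,\mathbb{Z})$ for every closed $T$ and every $\mu$.

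The base case $\mu=0$ is trivial since $T_0$ is a singleton. For a limit ordinal $\mu$, $T_\mu$ is the cofiltered limit $\varprojlim_{\nu<\mu} T_\nu$, so by Lemma \ref{jointly_surjective} every continuous map $T_\mu\to\mathbb{Z}$ factors through some $T_\nu$. Consequently $C(T_\mu,\mathbb{Z})$ is the directed union of the subgroups $C(T_\nu,\mathbb{Z})$, and $B_\mu(T)$ is the directed union of the $B_\nu(T)$. Spanning transfers immediately; linear independence follows from the inductive hypothesis together with Lemmas \ref{directed_of_sup} and \ref{linearIndependent_iUnion_of_directed}.

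The successor case $\mu\to\mu+1$ is the crux of the argument, and the step I expect to carry most of the technical weight. Let $j$ be the $\mu$-th index. The projection $T_{\mu+1}\to T_\mu$ that forgets the $j$-coordinate is surjective with fibres of size at most two; let $C\subseteq T_\mu$ denote the closed subspace of points with two preimages. Pullback along this projection together with the ``difference of the two lifts'' map yields a short exact sequence
\[
    0 \to C(T_\mu,\mathbb{Z}) \to C(T_{\mu+1},\mathbb{Z}) \to C(C,\mathbb{Z}) \to 0.
\]
The good products at stage $\mu+1$ split naturally in two: those whose index set omits $j$ recover $B_\mu(T)$, while those containing $j$ are to be matched with the good products of $C$ at stage $\mu$. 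Invoking the inductive hypothesis at $\mu$ for both $T$ and $C$, Lemma \ref{linearIndependent_leftExact} then yields the linear independence of $B_{\mu+1}(T)$. Spanning requires a separate argument: one rewrites any non-good product as a $\mathbb{Z}$-linear combination of good products of strictly smaller index set, using the very definition of goodness to drive the reduction.

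The obstacles I anticipate are twofold. First, the inductive statement must be formulated strongly enough to range over \emph{all} closed subspaces of the cube rather than just $S$ itself, so that the hypothesis can be applied to the auxiliary subspace $C$ in the successor step; picking the right notion of ``good product'' so that this transfer works cleanly is nontrivial. Second, verifying the splitting of $B_{\mu+1}(T)$ into pieces compatible with the short exact sequence — and checking the commutative diagram of Lemma \ref{linearIndependent_leftExact} on the nose — is where the substance of the proof lies. Once this is done, applying the claim at the top ordinal gives a basis of $C(S,\mathbb{Z})$ and concludes the argument.
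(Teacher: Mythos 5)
Your overall architecture matches the paper's: reduce to a closed subspace of $\{0,1\}^I$ via the clopen basis of $S$, well-order $I$, study ``monomials'' indexed by finite subsets of $I$, isolate the ``good'' products, and run a transfinite induction on ordinals with zero/limit/successor cases, the successor case hinging on a left-exact sequence
$0\to C(T_\mu,\Z)\to C(T_{\mu+1},\Z)\to C(C,\Z)$
and Lemma~\ref{linearIndependent_leftExact}. You also correctly foresee that the inductive statement must quantify over \emph{all} closed subsets so it can be applied to the auxiliary space $C$ (the paper's $S'$). That is exactly what the formalisation does.

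The one genuinely structural deviation is that you propose to prove by induction that the good products form a \emph{basis} (both spanning and linear independence), following the informal sketch in the source reference. The paper deliberately does \emph{not} do this: the ordinal induction proves only \emph{linear independence}, and spanning is handled once, independently, by writing $S$ as the cofiltered limit over finite subsets $J\subseteq I$ of the projections $S_J$ (Lemma~\ref{limit}), factoring any $f\in C(S,\Z)$ through some finite quotient (Lemma~\ref{jointly_surjective}), and proving the finite case directly with Kronecker deltas (Lemma~\ref{GoodProducts.spanFin}). This split sidesteps the awkwardness of threading a spanning claim through the successor step and is arguably a cleaner formulation of the induction.

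Two concrete gaps to flag. First, your description of ``good products'' --- ``a minimality condition on $J$ relative to the points of $T_\mu$'' --- does not capture the actual definition, which is purely linear-algebraic: order the finite strictly decreasing sequences lexicographically, and call $p$ good if $\ev_S(p)$ is \emph{not} in the $\Z$-span of the evaluations of the strictly smaller sequences. This lexicographic ordering (and the nontrivial fact that it is a well-order, Lemma~\ref{P_well_ordered}) is load-bearing throughout; the monotonicity lemma $E(S_{\mu'})\subseteq E(S_\mu)$ and the disjoint decomposition in the successor step both depend on the precise form of this definition. Second, your spanning argument in the successor case is underdeveloped and actually circular as stated: ``rewrite any non-good product as a linear combination of good products of strictly smaller index'' is the argument that spanning of \emph{all} products implies spanning of \emph{good} products (Lemma~\ref{GoodProducts.span_iff_products}), but you have not established that all products span $C(T_{\mu+1},\Z)$ in the first place --- that is the content of the finite-quotient argument the paper runs separately. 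If instead you want to push spanning through the short exact sequence, you would additionally need that the tail map $p\mapsto p^t$ is \emph{surjective} from $E'(S)$ onto $E(S')$; the paper only proves (and only needs) that it lands in $E(S')$. Either way there is missing work here that the paper's reorganisation avoids.
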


We can immediately reduce this to proving Lemma \ref{NobelingClosed} below as follows:
Let $I$ denote the set of clopen subsets of $S$. Then the map 
\[
    S \to  \prod_{i \in I} \{0,1\}
\]
whose $i$-th projection is given by the indicator function of the clopen subset $i$ is a closed embedding.

\begin{lemma}\label{NobelingClosed}\href{https://github.com/leanprover-community/mathlib4/blob/ba9f2e5baab51310883778e1ea3b48772581521c/Mathlib/Topology/Category/Profinite/Nobeling.lean#L1769-L1773}{\faExternalLink*}
    Let $I$ be a set and let $S$ be a closed subset of $ \prod_{i \in I} \{0,1\}$. Then $C(S,\Z)$ is a free abelian group.
\end{lemma}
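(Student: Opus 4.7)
The plan is to well-order $I$, identifying it with an ordinal $\mu$, and prove by transfinite induction on $\alpha \leq \mu$ the following statement: for every closed $S' \subseteq \{0,1\}^{I_\alpha}$, where $I_\alpha = \{i \in I : i < \alpha\}$, the abelian group $C(S', \Z)$ admits a $\Z$-basis of the form $\{e_J : J \in \mathcal{G}(S')\}$ for some family $\mathcal{G}(S')$ of finite subsets of $I_\alpha$. Here $e_i : S' \to \Z$ denotes the restriction of the $i$-th coordinate function and $e_J = \prod_{i \in J} e_i$. Instantiating with $\alpha = \mu$ and $S' = S$ will then give the lemma.

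The base case $\alpha = 0$ is immediate: $S'$ is empty or a single point, and $\mathcal{G}(S')$ is $\emptyset$ or $\{\emptyset\}$, respectively. For the successor case $\alpha = \gamma + 1$, let $\pi : \{0,1\}^{I_\alpha} \to \{0,1\}^{I_\gamma}$ forget the coordinate at $\gamma$ and introduce the branching locus
\[
    T = \{s \in \pi(S') : (s,0) \in S' \text{ and } (s,1) \in S'\},
\]
which is closed in $\{0,1\}^{I_\gamma}$, as is $\pi(S')$. The key ingredient is the short exact sequence
\[
    0 \to C(\pi(S'), \Z) \to C(S', \Z) \to C(T, \Z) \to 0
\]
whose first map is the pullback along $\pi$ and whose second sends $f$ to $s \mapsto f(s,1) - f(s,0)$. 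Exactness at the first two positions is a direct check; surjectivity at $C(T, \Z)$ rests on the fact that continuous $\Z$-valued maps extend from closed subsets of a profinite space, a consequence of the clopen basis. By the inductive hypothesis applied to $\pi(S')$ and to $T$, we obtain bases $\{e_J : J \in \mathcal{G}(\pi(S'))\}$ and $\{e_J : J \in \mathcal{G}(T)\}$. Since the lifts $e_{J \cup \{\gamma\}} \in C(S', \Z)$ map to $e_J|_T$ under the right-hand arrow, setting
\[
    \mathcal{G}(S') = \mathcal{G}(\pi(S')) \cup \{J \cup \{\gamma\} : J \in \mathcal{G}(T)\}
\]
yields the desired basis: Lemma \ref{linearIndependent_leftExact} provides linear independence, and a routine diagram chase through the sequence provides spanning.

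At a limit ordinal $\alpha$, let $S'_\gamma$ denote the projection of $S'$ to $\{0,1\}^{I_\gamma}$ for $\gamma < \alpha$, and set $\mathcal{G}(S') = \bigcup_{\gamma < \alpha} \mathcal{G}(S'_\gamma)$. Linear independence of $\{e_J : J \in \mathcal{G}(S')\}$ in $C(S', \Z)$ follows from Lemma \ref{linearIndependent_iUnion_of_directed}, since each $\{e_J : J \in \mathcal{G}(S'_\gamma)\}$ is linearly independent by the inductive hypothesis, remains so in $C(S', \Z)$ under the injective pullback along the surjection $S' \to S'_\gamma$, and the family of such subsets is directed under inclusion. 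For spanning, Cantor's intersection theorem identifies $S'$ with the cofiltered limit of the $S'_\gamma$ for $\gamma < \alpha$, so Lemma \ref{jointly_surjective} guarantees that every $f \in C(S', \Z)$ factors through some $S'_\gamma$ and hence lies in the span of $\{e_J : J \in \mathcal{G}(S'_\gamma)\}$.

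The main obstacle, in my view, will be the successor step: the short exact sequence must be set up carefully --- in particular, surjectivity at the right-hand term requires the extension property for $\Z$-valued continuous maps on closed subsets of profinite spaces --- and one must apply the inductive hypothesis twice at the same ordinal $\gamma$, to the two distinct closed subsets $\pi(S')$ and $T$ of $\{0,1\}^{I_\gamma}$. The limit step, while conceptually cleaner, is where the transfinite induction truly earns its keep over a Zorn's-lemma-style approach, since filtered colimits of free abelian groups are not in general free, and only the nested, compatible bases produced by the recursive definition of $\mathcal{G}$ above allow the argument to pass to the colimit.
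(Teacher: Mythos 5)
Your overall strategy---well-order $I$, induct over ordinals, use a left-exact sequence $0 \to C(\pi(S'),\Z) \to C(S',\Z) \to C(T,\Z)$ at the successor step (your $T$ is the paper's $S'$), and a directed-union argument at the limit step---matches the paper's. Two real divergences remain. You define the candidate basis $\mathcal{G}(S')$ \emph{recursively} alongside the induction and prove spanning and linear independence simultaneously; the paper defines $E(S)$ \emph{canonically}, as the set of products whose evaluation does not lie in the span of evaluations of lexicographically smaller products, proves spanning separately (reducing to finite subsets $J \subseteq I$ via cofiltered limits), and reserves the transfinite induction for linear independence alone. (The surjectivity of $g$ you flag as an obstacle is true but a red herring: for spanning at the successor step you only need the specific lifts $e_{J\cup\{\gamma\}} \mapsto e_J$, which you already exhibit, and Lemma~\ref{linearIndependent_leftExact} does not require surjectivity at the right-hand term.)

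The recursive definition of $\mathcal{G}$ opens a gap at the limit step. To invoke Lemma~\ref{linearIndependent_iUnion_of_directed} you need the family $\{e_J : J \in \mathcal{G}(S'_\gamma)\}_{\gamma<\alpha}$, pulled back to $C(S',\Z)$, to be directed under inclusion; since the index poset is totally ordered, this means nested: $\mathcal{G}(S'_{\gamma_1}) \subseteq \mathcal{G}(S'_{\gamma_2})$ for $\gamma_1 < \gamma_2 < \alpha$. But your inductive statement is existential (``admits a basis of the form $\{e_J\}$ for \emph{some} family $\mathcal{G}(S')$''), so applying the hypothesis at $\gamma_1$ and $\gamma_2$ yields two families with no guaranteed compatibility. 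Your closing sentence rightly observes that the argument depends on ``nested, compatible bases produced by the recursive definition,'' but that nestedness is precisely what has not been established; one must upgrade the induction to a genuine transfinite \emph{recursion} defining $\mathcal{G}$ as a function, and then prove nestedness by a secondary induction on $\gamma_2$ (splitting on successor versus limit), interleaved with the basis property. This is fixable, but it is exactly the bookkeeping the paper's canonical $E(S)$ makes unnecessary: $E(S_{\mu'}) \subseteq E(S_\mu)$ for $\mu' < \mu$ follows in two lines from injectivity of $\pi_{\mu'}^*$ (Lemma~\ref{good_mono}), with no recursion and no auxiliary compatibility lemma. The paper also remarks in Section~\ref{subsec:ordinal} that the informal source attempts to prove the full basis statement by induction and that decoupling spanning from the induction was one of the clarifications forced by the formalisation---your proposal, as written, reproduces precisely the ambiguity the paper was at pains to resolve.
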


To prove Lemma \ref{NobelingClosed}, we need to construct a basis of $C(S, \Z)$. Our proposed basis is defined as follows: 
\begin{itemize}
    \item Choose a well-ordering on $I$. 
    \item 
        Let $e_{S, i} \in C(S,\Z)$ denote the composition
        \[\begin{tikzcd}
            S & { \prod_{i \in I}\{0,1\}} & {\{0,1\}} & {\Z }
            \arrow[hook, from=1-1, to=1-2]
            \arrow["{p_i}", two heads, from=1-2, to=1-3]
            \arrow[hook, from=1-3, to=1-4]
        \end{tikzcd}\]
        where $p_i$ denotes the $i$-th projection map, and the other two maps are the obvious inclusions. 
    \item 
        Let $P$ denote the set of finite, strictly decreasing sequences in $I$. Order these lexicographically.
    \item 
        Let $\ev_S : P \to C(S,\Z)$ denote the map 
        \[
            (i_1,\cdots,i_r) \mapsto e_{S, i_1} \cdots e_{S, i_r}.
        \] 
    \item 
        For $p \in P$, let $\Sigma_S(p)$ denote the span in $C(S,\Z)$ of the set 
        \[
            \ev_S \left( \{q \in P \suchthat q < p\} \right).
        \]
    \item 
        Let $E(S)$ denote the subset of $P$ consisting of those elements whose evaluation cannot be written as a linear combination of evaluations of smaller elements of $P$, i.e. 
        \[
            E(S) := \{p \in P \suchthat \ev_S (p) \notin \Sigma_S(p)\}.
        \]
\end{itemize}
In Subsection \ref{subsec_span} we prove that the set $\ev_S \left(E(S)\right)$ spans $C(S,\Z)$, and in Subsection \ref{subsec_indep} we prove that the family 
\[
    \ev_S: E(S) \to C(S,\Z)
\]
is linearly independent, concluding the proof of Nöbeling's theorem. Subsection \ref{sec:notation_and_generalities} defines some notation which will be convenient for bookkeeping in the subsequent proof.

\subsection{Notation and generalities}\label{sec:notation_and_generalities}

For a subset $J$ of $I$ we denote by 
\[
    \pi_J :  \prod_{i \in I} \{0,1\} \to  \prod_{i \in I} \{0,1\} 
\]
the map whose $i$-th projection is $p_i$ if $i \in J$, and $0$ otherwise. These maps are continuous, and since source and target are compact Hausdorff spaces, they are also closed. 
Given a subset $S \subseteq  \prod_{i \in I} \{0,1\}$, we let 
\[
    S_J := \pi_J (S).
\] 
We can regard $I$ with its well-ordering as an ordinal. Then $I$ is the set of all strictly smaller ordinals. Given an ordinal $\mu$, we let 
\[
    \pi_\mu := \pi_{\{i \in I \suchthat i < \mu\}}
\] 
and 
\[
    S_\mu := S_{\{i \in I \suchthat i < \mu\}}.
\]

These maps induce injective $\Z$-linear maps 
\[
    \pi_J^* : C(S_J, \Z) \to C(S,\Z)
\] 
by precomposition.

Recall that we have defined $P$ as the set of finite, strictly decreasing sequences in $I$, ordered lexicographically. We will use this notation throughout the proof of Nöbeling's theorem. 

\begin{lemma}\label{Products.eval_eq}\href{https://github.com/leanprover-community/mathlib4/blob/ba9f2e5baab51310883778e1ea3b48772581521c/Mathlib/Topology/Category/Profinite/Nobeling.lean#L378-L392}{\faExternalLink*}
    For $p \in P$ and $x \in S$, we have 
    \[
        \ev_S(p)(x) = \begin{cases} 1 & \text{ if } \forall i \in p,\ x_i = 1 \\ 
            0 & \text{ otherwise.}  \end{cases} 
    \]
\end{lemma}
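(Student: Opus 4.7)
The plan is to unfold the definition of $\ev_S$ and reduce to a straightforward calculation about finite products in $\Z$ of values lying in $\{0,1\}$. By definition, $\ev_S(p)(x) = \prod_{i \in p} e_{S,i}(x)$, and tracing through the definition of $e_{S,i}$ we see that $e_{S,i}(x)$ is just the $i$-th coordinate $x_i \in \{0,1\} \subseteq \Z$. So the statement reduces to the purely arithmetic fact that a finite product of elements of $\{0,1\} \subseteq \Z$ equals $1$ if every factor equals $1$ and equals $0$ otherwise.

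I would prove this by induction on the length $r$ of the sequence $p = (i_1, \ldots, i_r)$. The base case $r = 0$ gives the empty product, which equals $1$, matching the vacuously true condition that every $i$ in the empty sequence satisfies $x_i = 1$. For the inductive step, write $\ev_S(p)(x) = x_{i_1} \cdot \ev_S(p')(x)$ where $p' = (i_2, \ldots, i_r)$. Case split on whether $x_{i_1} = 0$ or $x_{i_1} = 1$: in the former case the product is $0$ and the universal statement fails at $i = i_1$; in the latter, the product equals $\ev_S(p')(x)$, to which we apply the inductive hypothesis, observing that the condition for $p$ is equivalent to the condition for $p'$ together with $x_{i_1} = 1$.

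There is no real obstacle here; the statement is essentially a definitional unfolding. The only mild subtlety in the formalisation is bookkeeping about how $p$ is represented (finite sequence, list, or finset) and making sure the induction is compatible with the definition of $\ev_S$ as an iterated product. Strict decreasingness of $p$ plays no role in this lemma --- it will be important later, but here any finite sequence would do.
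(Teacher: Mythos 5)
Your proof is correct, and it simply spells out in detail what the paper dismisses as ``Obvious'': unfold $\ev_S(p)(x)$ to the product $\prod_{i \in p} x_i$ over $\{0,1\} \subseteq \Z$, then observe (by induction on the length of $p$, with the empty product giving $1$) that such a product is $1$ exactly when every factor is $1$. You are also right that strict decreasingness of $p$ is irrelevant here; this matches the paper, which uses it only later.
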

\begin{proof}
    Obvious.
\end{proof}

\begin{lemma}\label{Products.evalFacProp}\href{https://github.com/leanprover-community/mathlib4/blob/ba9f2e5baab51310883778e1ea3b48772581521c/Mathlib/Topology/Category/Profinite/Nobeling.lean#L394-L403}{\faExternalLink*}
    Let $J$ be a subset of $I$ and let $p \in P$ be such that $i \in p$ implies $i \in J$. Then $\pi_J^*(\ev_{S_J} (p)) = \ev_S (p)$. 
\end{lemma}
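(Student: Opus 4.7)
The plan is to prove the equality by showing that both functions take the same value at every point $x \in S$, and then appealing directly to the pointwise description of $\ev_S$ and $\ev_{S_J}$ given in Lemma \ref{Products.eval_eq}.

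First, I would unfold the precomposition: by definition of $\pi_J^*$ we have
\[
    \pi_J^*(\ev_{S_J}(p))(x) = \ev_{S_J}(p)(\pi_J(x))
\]
for every $x \in S$ (and $\pi_J(x)$ indeed lies in $S_J = \pi_J(S)$ by definition). So the task reduces to showing $\ev_{S_J}(p)(\pi_J(x)) = \ev_S(p)(x)$ for every such $x$.

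Next, I would apply Lemma \ref{Products.eval_eq} to both sides: the left-hand side equals $1$ exactly when $\pi_J(x)_i = 1$ for all $i \in p$, and the right-hand side equals $1$ exactly when $x_i = 1$ for all $i \in p$; otherwise both sides are $0$. The key observation is that by definition of $\pi_J$, the $i$-th coordinate of $\pi_J(x)$ equals $x_i$ whenever $i \in J$. Since the hypothesis guarantees that every $i \in p$ lies in $J$, we have $\pi_J(x)_i = x_i$ for all $i \in p$, so the two conditions coincide.

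There is no real obstacle here: the lemma is essentially a direct unfolding of the definitions, relying only on the explicit formula from Lemma \ref{Products.eval_eq} and the defining property of $\pi_J$ on coordinates in $J$. The only thing to be careful about is bookkeeping — making sure we restrict $\pi_J$ correctly so that its image lands in $S_J$, and distinguishing the two cases (all coordinates equal to $1$ vs. at least one not) cleanly.
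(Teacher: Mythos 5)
Your proposal is correct and follows essentially the same route as the paper: observe that $\pi_J(x)_i = x_i$ for all $i \in p$ (since $p \subseteq J$), then conclude by the pointwise formula of Lemma \ref{Products.eval_eq}. Your write-up just spells out the unfolding of $\pi_J^*$ as precomposition and the case split more explicitly than the paper does.
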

\begin{proof}
    Since $i \in p$ implies $i \in J$, we have 
    \[
        x_i = \pi_J^*(x)_i
    \] 
    for all $x \in S$ and $i \in p$. The result now follows from Lemma \ref{Products.eval_eq}. 
\end{proof}

\begin{remark}\label{rem:prop_of_isGood}\href{https://github.com/leanprover-community/mathlib4/blob/ba9f2e5baab51310883778e1ea3b48772581521c/Mathlib/Topology/Category/Profinite/Nobeling.lean#L414-L428}{\faExternalLink*}
    The hypothesis in Lemma \ref{Products.evalFacProp} holds in particular if $p \in E(S_J)$. Indeed, suppose $i \in p$, then if $i \notin J$, we have $\ev_{S_J} (p) = 0$. 
\end{remark}

\begin{lemma}\label{good_mono}\href{https://github.com/leanprover-community/mathlib4/blob/ba9f2e5baab51310883778e1ea3b48772581521c/Mathlib/Topology/Category/Profinite/Nobeling.lean#L964-L973}{\faExternalLink*}
    If $\mu', \mu$ are ordinals satisfying $\mu' < \mu$, then $E(S_{\mu'}) \subseteq E(S_{\mu})$.
\end{lemma}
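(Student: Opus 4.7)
The plan is to argue by contradiction without any transfinite induction, using the lexicographic order on $P$ together with Lemma~\ref{Products.evalFacProp} and the injectivity of pullback along a continuous surjection. Although the statement allows for an arbitrary pair $\mu' < \mu$, a direct argument goes through: any relation expressing $\ev_{S_\mu}(p)$ as a combination of smaller evaluations will pull back to a relation expressing $\ev_{S_{\mu'}}(p)$ similarly, because the lex order already confines the sequences $q$ in question to be supported below $\mu'$.

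Concretely, I would take $p \in E(S_{\mu'})$ and note, via Remark~\ref{rem:prop_of_isGood}, that every entry of $p$ is strictly less than $\mu'$. Then I would assume for contradiction a relation
\[
    \ev_{S_\mu}(p) = \sum_{q < p} c_q \, \ev_{S_\mu}(q)
\]
with $c_q \in \Z$ and only finitely many nonzero terms. After discarding terms for which $\ev_{S_\mu}(q)$ already vanishes, every surviving $q$ is supported in $\{i < \mu\}$. The crux --- and the step I expect to be the only delicate one --- is the observation that the lexicographic comparison $q < p$ forces the leading (largest) entry of any nonempty surviving $q$ to be at most the largest entry of $p$, which is $< \mu'$; since $q$ is strictly decreasing, every entry of $q$ is then $< \mu'$ as well.

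With this in hand, both $p$ and every surviving $q$ satisfy the hypothesis of Lemma~\ref{Products.evalFacProp} with $J = \{i \in I \mid i < \mu'\}$. Applying the lemma, the assumed relation rewrites as $\pi_{\mu'}^*(\ev_{S_{\mu'}}(p)) = \sum_q c_q \, \pi_{\mu'}^*(\ev_{S_{\mu'}}(q))$ inside $C(S_\mu,\Z)$. The map $\pi_{\mu'}$ restricts to a continuous surjection $S_\mu \to S_{\mu'}$ --- its image is $\pi_{\mu'}(\pi_\mu(S)) = \pi_{\mu'}(S) = S_{\mu'}$ --- so the induced pullback $\pi_{\mu'}^*$ on $\Z$-valued continuous maps is injective, and cancelling it yields $\ev_{S_{\mu'}}(p) \in \Sigma_{S_{\mu'}}(p)$, contradicting $p \in E(S_{\mu'})$. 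Apart from the lex-order observation, every step reduces to Lemma~\ref{Products.evalFacProp} and the injectivity of $\pi_{\mu'}^*$.
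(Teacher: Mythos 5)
Your proof is correct and matches the paper's argument: both proceed by noting that the entries of $p$ (and, by the lex-order observation you highlight, of every $q<p$) lie below $\mu'$, transferring the span relation through $\pi_{\mu'}^*$ via Lemma~\ref{Products.evalFacProp}, and concluding by injectivity of $\pi_{\mu'}^*$. The lex-order step you flag as delicate is indeed the crux and is exactly what the paper records separately in Lemma~\ref{eval_image}; your interlude about discarding terms whose evaluation vanishes and being supported in $\{i<\mu\}$ is harmless but redundant, since the lex-order fact alone already pins every entry of every $q<p$ below $\mu'$.
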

\begin{proof}
    Let $p \in E(S_{\mu'})$. Then every entry of $p$ is $<\mu'$, and it suffices to show that if 
    \[
        \ev_{S_\mu}(p) = \pi_{\mu'}^*(\ev_{S_{\mu'}}(p))
    \] 
    is in the span of 
    \[
        \ev_{S_{\mu}} \left( \{q \in P \suchthat q < p\} \right) 
        = \pi_{\mu'}^* \left( \ev_{S_{\mu'}} \left( \{q \in P \suchthat q < p\} \right) \right)
    \]
    then $\ev_{S_{\mu'}}(p)$ is in the span of $\ev_{S_{\mu'}} \left( \{q \in P \suchthat q < p\} \right)$. This follows by injectivity of $\pi_{\mu'}^*$. 
\end{proof}

\subsection{Span}\label{subsec_span}

The following series of lemmas proves that $\ev_S \left(E(S)\right)$ spans $C(S,\Z)$.

\begin{lemma}\label{P_well_ordered}
    The set $P$ is well-ordered.
\end{lemma}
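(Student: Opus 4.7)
The plan is to prove that $P$ has no infinite strictly descending chain, by transfinite induction on the well-ordered set $I$. For each ordinal $\mu$ let $P_\mu \subseteq P$ denote the set of finite strictly decreasing sequences drawn from $\{i \in I \suchthat i < \mu\}$, equipped with the restricted lex order; I will show by transfinite induction on $\mu$ that each $P_\mu$ is well-ordered. Taking $\mu$ to be at least the order type of $I$ then yields the statement, since every element of $P$ lies in some $P_\mu$.

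For the inductive step I argue by contradiction. Suppose $(p^{(n)})_{n \in \mathbb{N}}$ is an infinite strictly descending chain in $P_\mu$. Since the empty sequence is the lex-minimum of $P_\mu$, no $p^{(n)}$ can be empty (otherwise the chain could not descend further past that index). Let $a_n \in I$ denote the first entry of $p^{(n)}$. Because lex order examines first entries first, the descending condition forces $a_n \ge a_{n+1}$, and by the well-ordering of $I$ the sequence $(a_n)$ is eventually constant, equal to some $a < \mu$ for all $n \ge N$. For $n \ge N$, write $p^{(n)} = (a, q^{(n)})$ where $q^{(n)}$ is a (possibly empty) finite strictly decreasing sequence in $\{i \in I \suchthat i < a\}$. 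The lex comparison of $p^{(n)}$ and $p^{(n+1)}$ then reduces to that of $q^{(n)}$ and $q^{(n+1)}$, giving an infinite strictly descending chain in $P_a$; since $a < \mu$, this contradicts the inductive hypothesis.

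The main obstacle I anticipate is not conceptual but a small piece of bookkeeping around the length-$1$ sequence $p^{(n)} = (a)$ (where the tail $q^{(n)}$ is empty), and around the precise lex convention on sequences of different lengths. The key observation is that the term $(a)$ can appear at most once among the $p^{(n)}$ with $n \ge N$: any other sequence starting with $a$ has a nonempty tail and is therefore lex-comparable to $(a)$ in a fixed direction, so after $(a)$ occurs the chain must move to a sequence whose first entry is strictly less than $a$, contradicting the constancy of $(a_n)$ past index $N$. Once this edge case is dispatched, the recursion on $a$ runs cleanly, giving well-foundedness of the lex order on $P$ and hence, together with the total order, the well-ordering asserted by the lemma.
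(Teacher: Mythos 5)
Your proof is correct, and it is built on the same core move as the paper's sketch: given a hypothetical infinite strictly descending chain in $P$, extract the first entries, note they are eventually constant by well-ordering of $I$, strip them off, and recurse on the tails. Where you genuinely diverge is in how the recursion is organized. The paper's sketch iterates the stripping step and concludes ``continuing this way, we get a strictly decreasing sequence in $I$, a contradiction'' --- an infinite regress that is informally plausible but awkward to make precise, and the paper explicitly flags this sketch as ill-suited for formalisation. You instead package the one-step argument as a transfinite induction on a cutoff ordinal $\mu$, showing each $P_\mu$ is well-founded: a descending chain in $P_\mu$ yields, after passing to the common first entry $a < \mu$, a descending chain of tails in $P_a$, contradicting the inductive hypothesis. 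This is a cleaner decomposition; it replaces ``repeat indefinitely'' by a single appeal to a well-founded hypothesis and would formalize far more directly than the paper's sketch, though for the record the actual Mathlib proof takes a third route, constructing the relevant \lean{WellFounded} witness directly. Your handling of the empty-tail edge case is also right: since the empty sequence is the lex-minimum, at most one term past index $N$ can equal $(a)$, and if it occurs the chain cannot descend further while keeping first entry $a$, so one may discard finitely many initial terms and obtain a genuine infinite descending chain in $P_a$.
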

\begin{proof}[Proof sketch]
    Suppose not. Take a strictly decreasing sequence $(p_n)$ in $P$. Let $a_n$ denote first term of $p_n$. Then $(a_n)$ is a decreasing sequence in $I$ and hence eventually constant. Denote its limit by $a$. Let $q_n = p_n \setminus {a_n}$. Then there exists an $N$ such that $(q_n)_{n \geq N}$ is a strictly decreasing sequence in $P$ and we can repeat the process of taking the indices of the first factors, get a decreasing sequence in $I$ whose limit is strictly smaller than $a$. Continuing this way, we get a strictly decreasing sequence in $I$, a contradiction.
\end{proof}

\begin{remark}
    The proof sketch of Lemma \ref{P_well_ordered} above is ill-suited for formalisation. Kim Morrison gave a formalised proof \href{https://github.com/leanprover-community/mathlib4/pull/6361}{\faExternalLink*}, following similar ideas to those above, which used close to 300 lines of code. A few days later, Junyan Xu found a proof \href{https://github.com/leanprover-community/mathlib4/pull/6432}{\faExternalLink*} that was ten times shorter, directly using the inductive datatype \lean{WellFounded}. This is the only result whose proof indicated in this paper differs significantly from the one used in the formalisation.
\end{remark}

\begin{lemma}\label{GoodProducts.span_iff_products}\href{https://github.com/leanprover-community/mathlib4/blob/ba9f2e5baab51310883778e1ea3b48772581521c/Mathlib/Topology/Category/Profinite/Nobeling.lean#L450-L469}{\faExternalLink*}
    If $\ev_S \left(P\right)$ spans $C(S,\Z)$, then $\ev_S \left(E(S)\right)$ spans $C(S,\Z)$. 
\end{lemma}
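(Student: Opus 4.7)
The plan is to prove the stronger pointwise statement that $\ev_S(p)$ lies in the span of $\ev_S(E(S))$ for every $p \in P$. Once this is shown, the lemma follows at once: the hypothesis says that $\ev_S(P)$ spans $C(S,\Z)$, so if the span of $\ev_S(E(S))$ already contains $\ev_S(P)$, then it must equal $C(S,\Z)$.

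To prove the pointwise statement I would proceed by well-founded induction on $p \in P$, using the well-foundedness supplied by Lemma \ref{P_well_ordered}. Fix $p$ and assume the claim holds for every $q < p$. Split on whether $p \in E(S)$. If $p \in E(S)$, the claim is immediate since $\ev_S(p)$ is itself an element of $\ev_S(E(S))$. If $p \notin E(S)$, then by the very definition of $E(S)$ we have $\ev_S(p) \in \Sigma_S(p)$, i.e. $\ev_S(p)$ is a $\Z$-linear combination of vectors of the form $\ev_S(q)$ with $q < p$. The induction hypothesis places each of these in the span of $\ev_S(E(S))$, and closing under $\Z$-linear combinations finishes the case.

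The mathematical content is therefore essentially a clean transfinite (in fact well-founded) induction with no genuine obstacle. The only subtlety that one might highlight is that the argument hinges entirely on Lemma \ref{P_well_ordered}, which is itself nontrivial (as the accompanying remark makes clear). From the formalisation perspective, the main care required is in setting up the recursion: one feeds the well-founded ``less than'' relation on $P$ into the recursor applied to the predicate ``$\ev_S(p)$ lies in the span of $\ev_S(E(S))$'', and then performs the two-way case split on membership in $E(S)$. Everything else is routine manipulation of spans and the defining property of $E(S)$.
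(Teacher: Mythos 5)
Your proof is correct and takes essentially the same approach as the paper: the paper phrases it as a minimal-counterexample argument (take the smallest $p\in P$ with $\ev_S(p)$ outside the span of $\ev_S(E(S))$ and derive a contradiction), which is just the contrapositive packaging of your direct well-founded induction, and both rest on Lemma~\ref{P_well_ordered}. The only cosmetic difference is that you make explicit the case split on whether $p\in E(S)$, whereas the paper leaves the $p\in E(S)$ case implicit (it is immediately ruled out by the choice of $p$).
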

\begin{proof}
    It suffices to show that $\ev_S \left(P\right)$ is contained in the span of $\ev_S \left(E(S)\right)$. Suppose it is not, and let $p$ be the smallest element of $P$ whose evaluation is not in the span of $\ev_S \left(E(S)\right)$ (this $p$ exists by Lemma \ref{P_well_ordered}). Write $\ev_S (p)$ as a linear combination of evaluations of strictly smaller elements of $P$. By minimality of $p$, each term of the linear combination is in the span of $\ev_S\left(E(S)\right)$, implying that $p$ is as well, a contradiction. 
\end{proof}

\begin{lemma}\label{limit}\href{https://github.com/leanprover-community/mathlib4/blob/ba9f2e5baab51310883778e1ea3b48772581521c/Mathlib/Topology/Category/Profinite/Nobeling.lean#L229-L246}{\faExternalLink*}
    Let $F$ denote the contravariant functor from the (filtered) poset of finite subsets of $I$ to the category of profinite spaces, which sends $J$ to $S_J$. Then $S$ is homeomorphic to the limit of $F$. 
\end{lemma}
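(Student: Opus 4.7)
The plan is to exhibit an explicit continuous map $\varphi : S \to \varprojlim_J S_J$ and show that it is a homeomorphism. Because $S$ is compact Hausdorff (it is closed in a product of finite discrete spaces) and the limit is Hausdorff (it embeds in $\prod_J S_J$, a product of Hausdorff spaces), it suffices by a standard argument to prove that $\varphi$ is a continuous bijection. The natural candidate is $\varphi(s) := (\pi_J(s))_J$; continuity is immediate coordinatewise, compatibility with the transition maps $\pi_{J}|_{S_{J'}} : S_{J'} \to S_J$ for $J \subseteq J'$ follows from $\pi_J \circ \pi_{J'} = \pi_J$, and injectivity is trivial because evaluating at singletons $J = \{i\}$ recovers every coordinate of $s$.

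The main content is surjectivity, which I would deduce from Cantor's intersection theorem (Theorem \ref{cantor_inter}). Given a compatible family $(t_J)_J \in \varprojlim_J S_J$, I would set
\[
    F_J := S \cap \pi_J^{-1}(\{t_J\}).
\]
Each $F_J$ is nonempty since $t_J \in S_J = \pi_J(S)$, and it is closed in the compact Hausdorff space $S$. The family $(F_J)$ is directed under the superset relation: the poset of finite subsets of $I$ is directed (under union), and if $J \subseteq J'$ then compatibility gives $\pi_J(t_{J'}) = t_J$, so any $s \in F_{J'}$ satisfies $\pi_J(s) = \pi_J(\pi_{J'}(s)) = \pi_J(t_{J'}) = t_J$, i.e. $F_{J'} \subseteq F_J$. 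Cantor's intersection theorem then yields some $s \in \bigcap_J F_J$, and by construction $\varphi(s) = (t_J)_J$.

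The main obstacle is simply lining everything up with the hypothesis of Cantor's intersection theorem — in particular, correctly identifying the superset-directedness of the $F_J$ from the union-directedness of finite subsets, and recording the closedness/compactness of each $F_J$ inside $S$. A minor subtlety is that the limit in the statement lives in the category of profinite spaces, so one also needs the fact (available in Mathlib) that such cofiltered limits may be computed at the level of underlying topological spaces, after which the argument above applies directly.
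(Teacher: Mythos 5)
Your proposal follows essentially the same route as the paper: reduce to a continuous bijection using compactness of $S$ and Hausdorffness of the limit, get injectivity from singletons, and prove surjectivity by applying Cantor's intersection theorem to the superset-directed family of fibres. You even add a detail the paper's sketch leaves implicit, namely that each fibre $S \cap \pi_J^{-1}\{t_J\}$ is nonempty because $t_J \in \pi_J(S)$.
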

\begin{proof}[Proof sketch]
    Since $S$ is compact and the limit is Hausdorff, it suffices to show that the natural map from $S$ to the limit of $F$ induced by the projection maps $\pi_J : S \to S_J$ is bijective. 
    
    For injectivity, let $a, b \in S$ such that $\pi_J(a) = \pi_J(b)$ for all finite subsets $J$ of $I$. For all $i \in I$ we have $a_i = \pi_{\{i\}} (a) = \pi_{\{i\}} (b) = b_i$, hence $a = b$. 

    For surjectivity, let $b \in \operatorname{lim} F$. Denote by 
    \[
        f_J : \operatorname{lim} F \to S_J
    \] 
    the projection maps. We need to construct an element $a$ of $C$ such that $\pi_J(a) = f_J(b)$ for all $J$. In other words, we need to show that the intersection
    \[
        \bigcap_J \pi_J^{-1} \{f_J(b)\},    
    \]
    where $J$ runs over all finite subsets of $I$, is nonempty. By Cantor's intersection theorem \ref{cantor_inter}, it suffices to show that this family is directed (all the fibres are closed by continuity of the $\pi_J$, and closed subsets of a compact Hausdorff space are compact). To show that it is directed, it suffices to show that for $J \subseteq K$, we have 
    \[
        \pi_K^{-1}\{f_K(b)\} \subseteq \pi_J^{-1}\{f_J(b)\}
    \] 
    (by Lemma \ref{directed_of_sup}). This follows easily because the transition maps in the limits are just restrictions of the $\pi_J$. 
\end{proof}

\begin{lemma}\label{GoodProducts.spanFin}\href{https://github.com/leanprover-community/mathlib4/blob/ba9f2e5baab51310883778e1ea3b48772581521c/Mathlib/Topology/Category/Profinite/Nobeling.lean#L623-L675}{\faExternalLink*}
    Let $J$ be a finite subset of $I$. Then $\ev_{S_J}\left(E(S_J)\right)$ spans $C(S_J,\Z)$. 
\end{lemma}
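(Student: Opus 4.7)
The plan is to exploit the finiteness of $J$ to reduce the statement to a concrete combinatorial identity expressing indicator functions of points as polynomials in the generators $e_{S_J, i}$.

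First, I would apply Lemma \ref{GoodProducts.span_iff_products} to reduce the problem to showing that $\ev_{S_J}(P)$ spans $C(S_J, \mathbb{Z})$. Next, note that since $J$ is finite, $S_J$ is a closed subset of the finite space $\prod_{i \in I}\{0,1\}$ visible through the $J$-coordinates — more precisely, $S_J$ is a finite discrete topological space. Consequently $C(S_J, \mathbb{Z})$ is just the free $\mathbb{Z}$-module of all functions $S_J \to \mathbb{Z}$, which is spanned by the indicator functions $\mathbf{1}_{\{x\}}$ for $x \in S_J$. So it suffices to show that every such indicator function lies in the span of $\ev_{S_J}(P)$.

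Fix $x \in S_J$ and set $J_x^{+} = \{i \in J \suchthat x_i = 1\}$ and $J_x^{-} = \{i \in J \suchthat x_i = 0\}$. The key identity is
\[
    \mathbf{1}_{\{x\}} \;=\; \prod_{i \in J_x^{+}} e_{S_J, i} \;\cdot\; \prod_{i \in J_x^{-}} (1 - e_{S_J, i}),
\]
which is immediate from Lemma \ref{Products.eval_eq}: a point $y \in S_J$ makes every factor equal to $1$ exactly when $y_i = x_i$ for all $i \in J$, and since $S_J \subseteq \prod_{i \in I}\{0,1\}$ is cut out by the $J$-coordinates in a way that ignores $i \notin J$, this happens iff $y = x$. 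Expanding this product and using that each $e_{S_J, i}$ takes values in $\{0,1\}$ (so $e_{S_J,i}^2 = e_{S_J,i}$ and repeated factors do not appear), one obtains a $\mathbb{Z}$-linear combination of terms of the form $\prod_{i \in T} e_{S_J, i}$ for finite subsets $T \subseteq J$. Ordering each such $T$ in strictly decreasing order yields an element $p_T \in P$ with $\ev_{S_J}(p_T) = \prod_{i \in T} e_{S_J, i}$, so $\mathbf{1}_{\{x\}}$ lies in the span of $\ev_{S_J}(P)$.

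I expect the main obstacle to be not conceptual but bookkeeping: in the formalisation one must carefully convert the unordered product indexed by a finite subset $T$ into the element of $P$ obtained by sorting $T$ in decreasing order, and verify that this transformation agrees with the definition of $\ev_{S_J}$. The identity $\mathbf{1}_{\{x\}} = \prod (e) \cdot \prod (1-e)$ itself, while elementary, requires invoking Lemma \ref{Products.eval_eq} together with the observation that membership in $S_J$ is determined entirely by the $J$-coordinates, which is where the passage via $\pi_J$ in the definition of $S_J$ must be unpacked.
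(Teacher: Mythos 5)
Your proposal is correct and matches the paper's proof essentially step for step: reduce via Lemma~\ref{GoodProducts.span_iff_products} to spanning by $\ev_{S_J}(P)$, observe that finiteness of $S_J$ lets the Kronecker deltas span, and express each delta as $\prod_{x_i=1} e_{S_J,i} \cdot \prod_{x_i=0}(1 - e_{S_J,i})$, which expands into a $\Z$-linear combination of evaluations of elements of $P$. You fill in slightly more detail than the paper on the expansion and the reordering of subsets into strictly decreasing lists, but the argument is the same.
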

\begin{proof}
    By lemma \ref{GoodProducts.span_iff_products}, it suffices to show that $\ev_{S_J}(P)$ spans. For $x \in S_J$, denote by $f_x$ the map $S_J \to \Z$ given by the Kronecker delta $f_x(y) = \delta_{xy}$.   
    
    Since $S_J$ is finite, the set of continuous maps is actually the set of all maps, and the maps $f_x$ span $C(S_J,\Z)$.  

    Now let $j_1 > \cdots > j_r$ be a decreasing enumeration of the elements of $J$. Let $x \in S_J$ and let $e_i$ denote $e_{S_J, j_i}$ if $x_{j_i} = 1$ and $(1 - e_{S_J, j_i})$ if $x_{j_i} = 0$. Then 
    \[
        f_{j_i} = \prod_{i = 1}^{r} e_i   
    \]
    is in the span of $P(S_J)$, as desired. 
\end{proof}

\begin{lemma}\href{https://github.com/leanprover-community/mathlib4/blob/ba9f2e5baab51310883778e1ea3b48772581521c/Mathlib/Topology/Category/Profinite/Nobeling.lean#L687-L696}{\faExternalLink*}
    $P$ spans $C(S,\Z)$. 
\end{lemma}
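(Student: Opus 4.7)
The plan is to reduce to the finite case via the presentation of $S$ as a cofiltered limit. Given $f \in C(S,\Z)$, use Lemma \ref{limit} to identify $S$ with $\varprojlim_J S_J$, where $J$ ranges over finite subsets of $I$. Since $\Z$ is discrete, apply Lemma \ref{jointly_surjective} to factor $f$ through one of the components: there exist a finite $J \subseteq I$ and $g \in C(S_J, \Z)$ with $f = \pi_J^*(g)$.

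Next, apply the finite case, Lemma \ref{GoodProducts.spanFin}, together with Lemma \ref{GoodProducts.span_iff_products}, to write $g$ as a $\Z$-linear combination
\[
    g = \sum_{p} a_p \cdot \ev_{S_J}(p)
\]
indexed by finitely many $p \in P$. The elements of $E(S_J)$ produced by Lemma \ref{GoodProducts.spanFin} all have their entries in $J$ (cf. Remark \ref{rem:prop_of_isGood}), so Lemma \ref{Products.evalFacProp} applies to give $\pi_J^*(\ev_{S_J}(p)) = \ev_S(p)$ for each $p$ in the sum.

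Applying $\pi_J^*$ to the identity above and using $\Z$-linearity then yields
\[
    f = \pi_J^*(g) = \sum_{p} a_p \cdot \ev_S(p),
\]
exhibiting $f$ as an element of the span of $\ev_S(P)$. The main (already-done) obstacle is Lemma \ref{limit}, which provides the cofiltered-limit presentation needed to invoke Lemma \ref{jointly_surjective}; after that, the argument is a clean assembly of existing lemmas. The only subtle bookkeeping point is ensuring that the indices of the $p$ produced by the finite-span lemma all lie in $J$, so that $\pi_J^*$ sends them to the genuine evaluations $\ev_S(p)$ rather than to something that would fail to be a product of $e_{S,i}$'s on $S$.
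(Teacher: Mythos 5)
Your proof is correct and takes essentially the same route as the paper: reduce to the finite case via Lemma \ref{limit} and Lemma \ref{jointly_surjective}, invoke Lemma \ref{GoodProducts.spanFin} to span $C(S_J,\Z)$, and pull back along $\pi_J^*$ using Lemma \ref{Products.evalFacProp} together with Remark \ref{rem:prop_of_isGood}. The citation of Lemma \ref{GoodProducts.span_iff_products} is superfluous here --- Lemma \ref{GoodProducts.spanFin} already provides the span by $E(S_J)$, whose elements have entries in $J$, which is exactly the fact you correctly flag as the key bookkeeping point.
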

\begin{proof}
    Let $f \in C(S,\Z)$. Then by Lemmas \ref{limit} and \ref{jointly_surjective}, there is a $g \in C(S_J, \Z)$ such that $f = \pi_J^* (g)$. Writing this $g$ as a linear combination of elements of $E(S_J)$, by Lemma \ref{Products.evalFacProp} we see that $f$ is a linear combination of elements of $P$ as desired. 
\end{proof}

\subsection{Linear independence}\label{subsec_indep}

\begin{notation}
    Regard $I$ with its well-ordering as an ordinal. Let $Q$ denote the following predicate on an ordinal $\mu \leq I$: 
            
    For all closed subsets $S$ of $ \prod_{i \in I} \{0,1\}$, such that for all $x \in S$ and $i \in I$, $x_i = 1$ implies $i < \mu$, $E(S)$ is linearly independent in $C(S,\Z)$. 
\end{notation}

We want to prove the statement $Q(I)$. We prove by induction on ordinals that $Q(\mu)$ holds for all ordinals $\mu \leq I$.

\begin{lemma}\href{https://github.com/leanprover-community/mathlib4/blob/ba9f2e5baab51310883778e1ea3b48772581521c/Mathlib/Topology/Category/Profinite/Nobeling.lean#L1732-L1740}{\faExternalLink*}
    The base case of the induction, $Q(0)$, holds.
\end{lemma}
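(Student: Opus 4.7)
The plan is to unpack the hypothesis and observe that it forces $S$ to be essentially trivial. Since there are no ordinals strictly smaller than $0$, the condition ``for all $x \in S$ and $i \in I$, $x_i = 1$ implies $i < 0$'' is equivalent to ``$x_i = 0$ for all $x \in S$ and all $i \in I$''. Hence $S \subseteq \{\mathbf{0}\}$, where $\mathbf{0}$ denotes the constant-zero element of $\prod_{i \in I} \{0,1\}$. So there are only two cases: $S = \emptyset$ or $S = \{\mathbf{0}\}$.

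In the case $S = \emptyset$, the module $C(S, \Z)$ is the zero module, and I will show $E(S) = \emptyset$. Indeed, for any $p \in P$, the evaluation $\ev_S(p)$ is the zero function, and the zero function always lies in $\Sigma_S(p)$ (as $0$ is in every submodule). Hence no $p$ belongs to $E(S)$, and the empty family is vacuously linearly independent.

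In the case $S = \{\mathbf{0}\}$, I will show $E(S) = \{\emptyset\}$, where $\emptyset \in P$ denotes the empty sequence. By Lemma \ref{Products.eval_eq}, $\ev_S(\emptyset)$ is the constant function $1$ (an empty product), and $\Sigma_S(\emptyset)$ is the span of the empty set, namely $\{0\}$; since $1 \neq 0$, we have $\emptyset \in E(S)$. For any nonempty $p \in P$, choosing any $i$ in $p$, we have $\mathbf{0}_i = 0 \neq 1$, so $\ev_S(p)(\mathbf{0}) = 0$ by Lemma \ref{Products.eval_eq}, hence $\ev_S(p) = 0 \in \Sigma_S(p)$ and $p \notin E(S)$. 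Therefore the family $\ev_S : E(S) \to C(S,\Z)$ consists of the single nonzero element $1 \in \Z \cong C(S,\Z)$, which is linearly independent over $\Z$.

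The proof involves no real obstacle; the main thing to get right is the convention that the empty product gives the constant function $1$, so that $\emptyset \in E(S)$ precisely when $S$ is nonempty. This edge case is the only subtlety and is where a formalisation needs to be careful with the definitions of $\ev_S$ and $\Sigma_S$ on the minimum element of $P$.
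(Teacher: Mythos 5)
Your proof is correct and takes essentially the same approach as the paper: both reduce to $S$ being empty or the singleton $\{\mathbf{0}\}$, and both identify $E(S) = \{\emptyset\}$ (the empty sequence) in the singleton case, whose evaluation $1$ generates $C(S,\Z) \cong \Z$. The one small difference is in how you exclude nonempty products $p$ from $E(S)$: you compute directly that $\ev_S(p) = 0$ (since some coordinate of $\mathbf{0}$ fails to equal $1$), whereas the paper observes that $\ev_S(\emptyset) = 1$ already spans all of $C(S,\Z) \cong \Z$, so $\ev_S(p)$ lies in $\Sigma_S(p)$ regardless of its value. Both routes are valid; the paper's avoids evaluating $\ev_S(p)$ at all, while yours is arguably more concrete and explains exactly why the evaluation vanishes.
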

\begin{proof}
    In this case, $S$ is empty or a singleton. If $S$ is empty, the result is trivial. Suppose $S$ is a singleton. We want to show that $E(S)$ consists of only the empty list, which evaluates to $1$ and is linearly independent in $C(S,\Z) \cong \Z$. Let $p \in P$ and suppose $p$ is nonempty. Then it is strictly larger than the empty list. But the evaluation of the empty list is $1$, which spans $C(S,\Z) \cong \Z$, and thus $\ev_S(p)$ is in the span of strictly smaller products, i.e. not in $E(S)$.  
\end{proof}

\subsubsection{Limit case}

Let $\mu$ be a limit ordinal, $S$ a closed subset such that for all $x \in S$ and $i \in I$, $x_i = 1$ implies $i < \mu$. In other words, $S = S_\mu$. Suppose $Q(\mu')$ holds for all $\mu' < \mu$. Then in particular $E(S_{\mu'})$ is linearly independent 

\begin{lemma}\label{eval_image}\href{https://github.com/leanprover-community/mathlib4/blob/ba9f2e5baab51310883778e1ea3b48772581521c/Mathlib/Topology/Category/Profinite/Nobeling.lean#L951-L958}{\faExternalLink*}
    Let $\mu' < \mu$ and $p \in P$ whose entries are all $< \mu'$. Then 
    \[
        \pi_{\mu'}^* \left( \ev_{S_{\mu'}} \left( \{q \in P \suchthat q < p\} \right) \right) = 
        \ev_{S_{\mu}} \left( \{q \in P \suchthat q < p\} \right).
    \]
\end{lemma}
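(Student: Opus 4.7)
The plan is to reduce the claim to a pointwise application of Lemma \ref{Products.evalFacProp}. For each $q \in P$ with $q < p$, one wants the identity $\pi_{\mu'}^*(\ev_{S_{\mu'}}(q)) = \ev_{S_\mu}(q)$; the asserted equality of images then follows by taking images over $\{q \in P \suchthat q < p\}$. To invoke Lemma \ref{Products.evalFacProp} with ambient set $S_\mu$ in place of $S$, I would first observe that $(S_\mu)_{\mu'} = \pi_{\mu'}(\pi_\mu(S)) = \pi_{\mu'}(S) = S_{\mu'}$, since $\pi_{\mu'} \circ \pi_\mu = \pi_{\mu'}$ when $\mu' < \mu$. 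The lemma then applies with $J = \{i \in I \suchthat i < \mu'\}$, provided every entry of $q$ lies in $J$.

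The real content is thus the combinatorial fact that $q < p$ in the lex order, combined with the hypothesis that all entries of $p$ are $< \mu'$, forces all entries of $q$ to be $< \mu'$ as well. Writing $p = (i_1 > \cdots > i_r)$ and $q = (j_1 > \cdots > j_s)$, the lex order (with the convention from the base-case proof, under which the empty list is strictly smaller than every nonempty element, i.e.\ a strict prefix is strictly smaller) leaves two possibilities: either $q$ is a strict prefix of $p$, in which case each $j_\ell = i_\ell < \mu'$; or there is a least index $k \leq \min(r,s)$ at which $q$ and $p$ differ, with $j_k < i_k$. In the second case $j_\ell = i_\ell < \mu'$ for $\ell < k$, and $j_k < i_k \leq i_1 < \mu'$, while $j_\ell < j_k < \mu'$ for $\ell > k$ by the strict decrease of $q$. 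Either way all entries of $q$ are $< \mu'$.

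Once this is established, the proof concludes: apply Lemma \ref{Products.evalFacProp} to each $q$ with $q < p$, taking ambient set $S_\mu$ and subset $J = \{i \in I \suchthat i < \mu'\}$, to obtain the pointwise identity, and then pass to images. The only real obstacle is the combinatorial check on the lex order; the rest is a mechanical rewrite using a lemma already in hand, and I would expect the formal proof to be quite short.
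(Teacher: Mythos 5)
Your proof is correct and follows the same route as the paper's: reduce to a pointwise application of Lemma~\ref{Products.evalFacProp} on each $q < p$, justified by the observation that $q < p$ together with all entries of $p$ lying below $\mu'$ forces all entries of $q$ to lie below $\mu'$. The paper asserts that lexicographic fact in a single sentence; you spell out the case analysis (strict prefix vs.\ first point of divergence), which is precisely the content a formal proof must supply, and your side remark that $(S_\mu)_{\mu'} = S_{\mu'}$ correctly identifies the ambient-set bookkeeping needed to invoke the lemma with $S_\mu$ in place of $S$.
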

\begin{proof}
    If $q < p$, then every element of $q$ is also $< \mu'$. Thus, by lemma \ref{Products.evalFacProp}, 
    \[
        \pi_{\mu'}^*\left(\ev_{S_{\mu'}}(q)\right) = \ev_{S_\mu} (q),
    \]
    as desired.
\end{proof}

\begin{lemma}\label{Products.limitOrdinal}\href{https://github.com/leanprover-community/mathlib4/blob/ba9f2e5baab51310883778e1ea3b48772581521c/Mathlib/Topology/Category/Profinite/Nobeling.lean#L1074-L1088}{\faExternalLink*}
    \[E(S_\mu) = \bigcup_{\mu' < \mu} E(S_{\mu'}) \]
\end{lemma}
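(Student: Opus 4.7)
The plan is to prove the two inclusions separately. The inclusion $\bigcup_{\mu' < \mu} E(S_{\mu'}) \subseteq E(S_\mu)$ is immediate from Lemma \ref{good_mono}, which was proved precisely to give this monotonicity.

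For the harder inclusion $E(S_\mu) \subseteq \bigcup_{\mu' < \mu} E(S_{\mu'})$, I would take $p \in E(S_\mu)$ and argue as follows. First, by Remark \ref{rem:prop_of_isGood} applied with $J = \{i \in I \suchthat i < \mu\}$, every entry of $p$ is strictly less than $\mu$. Since $p$ is a finite sequence and $\mu$ is a limit ordinal, I can choose $\mu' < \mu$ that strictly dominates every entry of $p$ (for example, the successor of the maximum entry of $p$, which is still $< \mu$ because $\mu$ is a limit). The goal is then to show $p \in E(S_{\mu'})$.

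To establish $p \in E(S_{\mu'})$, I would argue by contrapositive. Suppose $\ev_{S_{\mu'}}(p)$ lies in the span of $\ev_{S_{\mu'}}\left(\{q \in P \suchthat q < p\}\right)$. Apply the linear map $\pi_{\mu'}^*$ to this linear combination. On the left-hand side, since every entry of $p$ is $< \mu'$, Lemma \ref{Products.evalFacProp} gives $\pi_{\mu'}^*(\ev_{S_{\mu'}}(p)) = \ev_{S_\mu}(p)$. On the right-hand side, Lemma \ref{eval_image} directly rewrites the image of the span of smaller evaluations under $\pi_{\mu'}^*$ as the corresponding span inside $C(S_\mu, \Z)$. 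Combining these, $\ev_{S_\mu}(p) \in \Sigma_{S_\mu}(p)$, contradicting $p \in E(S_\mu)$.

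I expect the main obstacle to be the bookkeeping around the use of the limit-ordinal hypothesis: one must check that a strictly dominating $\mu' < \mu$ exists, which is where the case split between limit and successor ordinals is genuinely used. Everything else is formal manipulation of the definitions together with the already-proved lemmas \ref{good_mono}, \ref{Products.evalFacProp}, and \ref{eval_image}; in particular, no new linear independence or topological input is needed at this stage.
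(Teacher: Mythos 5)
Your proposal is correct and follows essentially the same route as the paper's proof: Lemma \ref{good_mono} for one inclusion, and for the other, pick $\mu' < \mu$ bounding the entries of $p$ (the limit hypothesis guarantees such a $\mu'$), then transfer the span membership along $\pi_{\mu'}^*$ via Lemmas \ref{Products.evalFacProp} and \ref{eval_image} to contradict $p \in E(S_\mu)$. The only cosmetic difference is that the paper sets $\mu' = \sup\{i+1 \mid i \in p\}$, which cleanly covers the case of the empty sequence ($\mu' = 0$), whereas your phrasing ``successor of the maximum entry'' tacitly assumes $p$ nonempty --- worth a one-line remark, but not a gap.
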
 
\begin{proof}
    The inclusion from right to left follows from Lemma \ref{good_mono}, so we only need to show that if $p \in E(S_\mu)$ then there exists $\mu'< \mu$ such that $p \in E(S_{\mu'})$. Take $\mu'$ to be the supremum of the set $\{i+1 \suchthat i \in p\}$. Then $\mu' = 0$ if $p$ is empty, and of the form $i+1$ for an ordinal $i < \mu$ if $p$ is nonempty. In either case, $\mu' < \mu$. 
    
    Since every $i \in p$ satisfies $i < \mu' < \mu$, we have 
    \[
        \ev_{S_\mu}(p) = \pi_{\mu'}^*(\ev_{S_{\mu'}}(p))
    \] 
    and 
    \[
        \ev_{S_{\mu}} \left( \{q \in P \suchthat q < p\} \right) 
        = \pi_{\mu'}^* \left( \ev_{S_{\mu'}} \left( \{q \in P \suchthat q < p\} \right) \right)
    \]
    so if $\ev_{S_{\mu'}}(p)$ is in the span of 
    \[
        \ev_{S_{\mu'}} \left( \{q \in P \suchthat q < p\} \right),
    \] 
    then $\ev_{S_{\mu}}(p)$ is in the span of 
    \[
        \ev_{S_{\mu}} \left( \{q \in P \suchthat q < p\} \right), 
    \]
    contradicting the fact that $p \in E(S_\mu)$. 
\end{proof}

\begin{lemma}\label{GoodProducts.union}\href{https://github.com/leanprover-community/mathlib4/blob/ba9f2e5baab51310883778e1ea3b48772581521c/Mathlib/Topology/Category/Profinite/Nobeling.lean#L1090-L1102}{\faExternalLink*}
    \[
        \ev_{S_\mu}\left( E(S_\mu)\right) = \bigcup_{\mu' < \mu} \pi_{\mu'}^*\left( \ev_{S_{\mu'}} \left( E(S_{\mu'}) \right) \right)
    \]
\end{lemma}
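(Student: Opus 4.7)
The plan is to derive this identity as an essentially cosmetic consequence of Lemma \ref{Products.limitOrdinal}, by applying $\ev_{S_\mu}$ to both sides and then rewriting each term on the right using the factorisation property of Lemma \ref{Products.evalFacProp}. Since $\ev_{S_\mu}$ commutes with unions (as the direct image of a set-theoretic function always does), applying it to the identity $E(S_\mu) = \bigcup_{\mu' < \mu} E(S_{\mu'})$ immediately yields
\[
    \ev_{S_\mu}\left(E(S_\mu)\right) = \bigcup_{\mu' < \mu} \ev_{S_\mu}\left(E(S_{\mu'})\right).
\]
So the whole content of the lemma reduces to the pointwise identity $\ev_{S_\mu}(p) = \pi_{\mu'}^*(\ev_{S_{\mu'}}(p))$ for every $\mu' < \mu$ and every $p \in E(S_{\mu'})$.

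To establish that pointwise identity, I would fix $\mu' < \mu$ and $p \in E(S_{\mu'})$, and verify the hypothesis of Lemma \ref{Products.evalFacProp} with $J = \{i \in I \mid i < \mu'\}$, i.e. that every entry $i$ of $p$ satisfies $i < \mu'$. This is exactly Remark \ref{rem:prop_of_isGood}: if some $i \in p$ violated the bound, then $\ev_{S_{\mu'}}(p)$ would vanish identically on $S_{\mu'}$ (because the $i$-th coordinate of every point of $S_{\mu'}$ is $0$), forcing $\ev_{S_{\mu'}}(p) = 0 \in \Sigma_{S_{\mu'}}(p)$ and contradicting $p \in E(S_{\mu'})$. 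Once we know the bound, Lemma \ref{Products.evalFacProp} applied with $J = \{i \in I \mid i < \mu'\}$ gives precisely $\pi_{\mu'}^*(\ev_{S_{\mu'}}(p)) = \ev_{S_\mu}(p)$, as required.

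Putting the two ingredients together gives both inclusions. For the forward inclusion, any $\ev_{S_\mu}(p)$ with $p \in E(S_\mu)$ lies in some $\ev_{S_\mu}(E(S_{\mu'}))$ by Lemma \ref{Products.limitOrdinal}, and equals $\pi_{\mu'}^*(\ev_{S_{\mu'}}(p))$ by the identity just established, landing in the target union. For the reverse inclusion, any $\pi_{\mu'}^*(\ev_{S_{\mu'}}(p))$ with $p \in E(S_{\mu'}) \subseteq E(S_\mu)$ (using Lemma \ref{good_mono}) equals $\ev_{S_\mu}(p) \in \ev_{S_\mu}(E(S_\mu))$. No step here is genuinely hard; the only mild subtlety is remembering to invoke Remark \ref{rem:prop_of_isGood} to secure the entry-bound needed to apply Lemma \ref{Products.evalFacProp}, and that is the one place where the proof would fail if one naïvely tried to push forward an arbitrary $p \in P$ instead of one in $E(S_{\mu'})$.
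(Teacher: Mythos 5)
Your proposal is correct and takes essentially the same approach as the paper, whose entire proof is the single sentence ``This follows from a combination of Lemmas \ref{Products.evalFacProp} and \ref{Products.limitOrdinal}.'' You have simply spelled out the details of that combination, including the (necessary) appeal to Remark \ref{rem:prop_of_isGood} to discharge the hypothesis of Lemma \ref{Products.evalFacProp}.
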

\begin{proof}
    This follows from a combination of Lemmas \ref{Products.evalFacProp} and \ref{Products.limitOrdinal}.
\end{proof}

The family of subsets in the union in Lemma \ref{GoodProducts.union} is directed with respect to the subset relation (this follows from Lemmas \ref{directed_of_sup}, \ref{good_mono}, and \ref{Products.evalFacProp}). The sets $\ev_{S_{\mu'}}(E(S_{\mu'}))$ are all linearly independent by the inductive hypothesis, and by injectivity of $\pi_\mu^*$, their images under that map are as well. Thus, by Lemma \ref{linearIndependent_iUnion_of_directed}, the union is linearly independent, and we are done. \href{https://github.com/leanprover-community/mathlib4/blob/ba9f2e5baab51310883778e1ea3b48772581521c/Mathlib/Topology/Category/Profinite/Nobeling.lean#L1742-L1749}{\faExternalLink*}

\subsubsection{Successor case}
Let $\mu$ be an ordinal, $S$ a closed subset such that for all $x \in S$ and $i \in I$, $x_i = 1$ implies $i < \mu + 1$. In other words, $S = S_{\mu+1}$. Suppose $Q(\mu)$ holds. Then in particular $\ev_{S_\mu} : E(S_{\mu}) \to C(S_\mu, \Z)$ is linearly independent. 

To prove the inductive step in the successor case, we construct a closed subset $S'$ of $ \prod_{i \in I} \{0,1\}$ such that for all $x \in S'$, $x_i = 1$ implies $i < \mu$, and a commutative diagram
\begin{equation}
\label{comm_diagram}
\begin{tikzcd}
	0 & {C(S_\mu,\Z)} & {C(S,\Z)} & {C(S',\Z)} \\
	& {E(S_\mu)} & {E(S)} & {E'(S)}
	\arrow[from=1-1, to=1-2]
	\arrow["{\pi_\mu^*}", from=1-2, to=1-3]
	\arrow["{g}", from=1-3, to=1-4]
	\arrow[hook, from=2-2, to=2-3]
	\arrow["{\ev_S}", from=2-3, to=1-3]
	\arrow["{\ev_{S_\mu}}", from=2-2, to=1-2]
	\arrow[hook', from=2-4, to=2-3]
\end{tikzcd}
\end{equation}
where the top row is exact and $E'(S)$ is the subset of $E(S)$ consisting if those $p$ with $\mu \in p$ (note that $p$ necessarily starts with $\mu$). For $p \in P$, we denote by $p^t \in P$ the sequence obtained by removing the first element of $p$ ($t$ stands for \emph{tail}). The linear map $g$ has the property that $g(\ev_S(p)) = \ev_{S'}(p^t)$ and $p^t \in E(S')$. Given such a construction, the successor step in the induction follows from lemma \ref{linearIndependent_leftExact}.

\begin{construction}\href{https://github.com/leanprover-community/mathlib4/blob/ba9f2e5baab51310883778e1ea3b48772581521c/Mathlib/Topology/Category/Profinite/Nobeling.lean#L1193-L1226}{\faExternalLink*}
    Let 
    \[
        S_0 = \{x \in S \suchthat x_\mu = 0\},
    \]
    \[
        S_1 = \{x \in S \suchthat x_\mu = 1\},
    \]
    and 
    \[
        S' = S_0 \cap \pi_\mu(S_1).
    \]
    Then $S'$ satisfies the inductive hypothesis. 
\end{construction}

\begin{construction}\href{https://github.com/leanprover-community/mathlib4/blob/ba9f2e5baab51310883778e1ea3b48772581521c/Mathlib/Topology/Category/Profinite/Nobeling.lean#L1261-L1287}{\faExternalLink*}
    Let $g_0 : S' \to S$ denote the inclusion map, and let $g_1 : S' \to S$ denote the map that swaps the $\mu$-th coordinate to $1$ (since $S' \subseteq \pi_\mu(S_1)$, this map lands in $S$). These maps are both continuous, and we obtain a linear map 
    \[
        g_1^* - g_0^* : C(S, \Z) \to C(S', \Z),
    \] 
    which we denote by $g$. 
\end{construction}

\begin{lemma}\label{succ_exact}\href{https://github.com/leanprover-community/mathlib4/blob/ba9f2e5baab51310883778e1ea3b48772581521c/Mathlib/Topology/Category/Profinite/Nobeling.lean#L1366-L1376}{\faExternalLink*}
    The top row in diagram (\ref{comm_diagram}) is exact. 
\end{lemma}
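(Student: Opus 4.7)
The plan is to verify the three conditions packaged by exactness of the short exact sequence: injectivity of $\pi_\mu^*$, vanishing of $g \circ \pi_\mu^*$, and the reverse inclusion $\ker g \subseteq \operatorname{im} \pi_\mu^*$. The first two are essentially formal unfoldings of the definitions, while the third is the substantive step.

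For injectivity, observe that $\pi_\mu$ restricted to $S$ is by definition surjective onto $S_\mu = \pi_\mu(S)$, so precomposition $\pi_\mu^*$ is injective (this was already noted in Subsection \ref{sec:notation_and_generalities}). For the composite $g \circ \pi_\mu^* = 0$, I would compute: for $h \in C(S_\mu, \Z)$ and $x \in S'$, both $g_0$ (the inclusion) and $g_1$ (the coordinate swap at $\mu$) change only the $\mu$-th coordinate, and $\pi_\mu$ zeroes out the $\mu$-th coordinate along with all coordinates above $\mu$. Hence $\pi_\mu \circ g_0 = \pi_\mu \circ g_1$ on $S'$, and so $(g \circ \pi_\mu^*)(h) = h \circ \pi_\mu \circ g_1 - h \circ \pi_\mu \circ g_0 = 0$.

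The main work is showing $\ker g \subseteq \operatorname{im} \pi_\mu^*$. Suppose $f \in C(S, \Z)$ satisfies $f \circ g_1 = f \circ g_0$. I want to produce $h \in C(S_\mu, \Z)$ with $h \circ \pi_\mu = f$. Since $S = S_{\mu+1}$, every point of $S$ has all coordinates above $\mu$ equal to $0$, so if $x, y \in S$ have $\pi_\mu(x) = \pi_\mu(y)$ then $x$ and $y$ can differ only at the $\mu$-th coordinate. The only non-trivial case is $x_\mu = 0$ and $y_\mu = 1$; in that case $x = \pi_\mu(y) \in \pi_\mu(S_1)$ and $x \in S_0$ (since $x_\mu = 0$), so $x \in S'$, and then $g_0(x) = x$ while $g_1(x) = y$, giving $f(x) = f(y)$ from the hypothesis $f \circ g_0 = f \circ g_1$. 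This defines $h$ set-theoretically as the unique factorisation through $\pi_\mu$.

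It remains to show continuity of $h$. Here I would invoke the fact that $\pi_\mu : S \to S_\mu$ is a continuous surjection between compact Hausdorff spaces, hence a quotient map; therefore the continuity of $h \circ \pi_\mu = f$ forces $h$ to be continuous. The potential obstacle, and the place where one has to be careful in the formalisation, is this well-definedness argument, since it relies crucially on the precise geometry of $S'$: the fact that $S'$ contains exactly those points of $S_0$ which lift to $S_1$ is what makes the hypothesis $f \circ g_0 = f \circ g_1$ strong enough to glue $f$'s values across the two fibres of $\pi_\mu$.
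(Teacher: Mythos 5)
Your proof is correct, and it takes a genuinely different route from the paper for the key inclusion $\ker g \subseteq \operatorname{im}\pi_\mu^*$. The paper constructs $f_\mu$ by \emph{gluing}: it restricts $f$ to $S_0$, defines $f|_{S_1}\circ\pi_\mu'$ on $\pi_\mu(S_1)$ (where $\pi_\mu'$ flips the $\mu$-th coordinate back to $1$), notes these two locally constant maps on closed sets agree on the overlap $S' = S_0 \cap \pi_\mu(S_1)$ precisely because $f\circ g_0 = f\circ g_1$, and then applies the piecewise-gluing construction (\lean{LocallyConstant.piecewise'}, Subsection \ref{subsec:piecewise}) to get a map on $S_0 \cup \pi_\mu(S_1) = S_\mu$. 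You instead check set-theoretic well-definedness on fibres of $\pi_\mu$ (the only nontrivial collapse being a pair $x\in S_0$, $y\in S_1$ with $\pi_\mu(y)=x$, forcing $x\in S'$ and hence $f(x)=f(y)$), and then appeal to the fact that a continuous surjection of compact Hausdorff spaces is a quotient map to transfer continuity. Both arguments hinge on the same geometric observation about $S'$; your quotient-map phrasing is arguably more conceptual and avoids any explicit covering argument, whereas the paper's gluing phrasing maps directly onto the helper lemma that was actually built for the formalisation and sidesteps the need to identify the fibres of $\pi_\mu|_S$ explicitly. Either would serve; the difference is essentially which general-topology tool one reaches for.
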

\begin{proof}
    We already know that $\pi_\mu^*$ is injective. Also, since 
    \(
        \pi_\mu \circ g_1 = \pi_\mu \circ g_0,
    \) 
    we have 
    \[
    g \circ \pi_\mu^* = 0. 
    \]
    Now suppose we have 
    \[
    f \in C(S,\Z) \text{ with } g(f) = 0. 
    \]
    We want to find an 
    \[
    f_\mu \in C(S_\mu,\Z) \text{ with } f_\mu \circ \pi_\mu = f.
    \]
    Denote by 
    \[
    \pi_\mu' : \pi_\mu(S_1) \to S_1
    \]
    the map that swaps the $\mu$-th coordinate to $1$. Since $g(f) = 0$, we have 
    \[
        f \circ g_1 = f \circ g_0
    \] 
    and hence the two continuous maps $f_{|S_0}$ and $f_{|S_1} \circ \pi_{\mu}'$ agree on the intersection 
    \[
        S' = S_0 \cap \pi_\mu (S_1)
    \] 
    Together, they define the desired continuous map $f_\mu$ on all of $S_0 \cup \pi_\mu(S_1) = S_\mu$.   
\end{proof}

\begin{lemma}\label{Products.max_eq_eval}\href{https://github.com/leanprover-community/mathlib4/blob/ba9f2e5baab51310883778e1ea3b48772581521c/Mathlib/Topology/Category/Profinite/Nobeling.lean#L1539-L1567}{\faExternalLink*}
    If $p \in P$ starts with $\mu$, then $g(\ev_S(p)) = \ev_{S'}(p^t)$. 
\end{lemma}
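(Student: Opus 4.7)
The plan is to prove the identity pointwise, using Lemma \ref{Products.eval_eq} as the workhorse and unfolding the definition $g = g_1^* - g_0^*$. Fix $x \in S'$ and write $p = (\mu, i_2, \ldots, i_r)$ with $\mu > i_2 > \cdots > i_r$, so that $p^t = (i_2, \ldots, i_r)$. I need to check the values at $x$ of both $g_0^*(\ev_S(p))$ and $g_1^*(\ev_S(p))$.

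First I would handle the $g_0$ term. Since $S' \subseteq S_0$, every $x \in S'$ satisfies $x_\mu = 0$, and $g_0$ is the inclusion, so $(g_0(x))_\mu = 0$. Because $\mu \in p$, Lemma \ref{Products.eval_eq} forces $\ev_S(p)(g_0(x)) = 0$. Thus $g_0^*(\ev_S(p))$ vanishes identically on $S'$.

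Next I would handle the $g_1$ term. By construction $g_1$ agrees with $g_0$ on all coordinates other than $\mu$, and sends the $\mu$-th coordinate to $1$. Therefore $(g_1(x))_\mu = 1$ and $(g_1(x))_i = x_i$ for each $i \in p^t$ (all of which are strictly smaller than $\mu$). Lemma \ref{Products.eval_eq} applied to $\ev_S(p)$ then gives value $1$ iff $x_i = 1$ for all $i \in p^t$, which by a second application of the same lemma is precisely $\ev_{S'}(p^t)(x)$.

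Combining, $g(\ev_S(p))(x) = g_1^*(\ev_S(p))(x) - g_0^*(\ev_S(p))(x) = \ev_{S'}(p^t)(x) - 0 = \ev_{S'}(p^t)(x)$, and since $x \in S'$ was arbitrary this yields the desired equality of continuous functions on $S'$. There is no real obstacle here; the only point that requires mild care is verifying that $g_1$ is well-defined as a map $S' \to S$ (which is guaranteed by $S' \subseteq \pi_\mu(S_1)$, as recorded in the construction of $g_1$) and that $\mu$ really is the first entry of $p$, so that all other entries of $p$ lie in $p^t$ and are unchanged by $g_1$.
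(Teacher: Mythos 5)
Your proposal is correct and fills in exactly the case analysis via Lemma \ref{Products.eval_eq} that the paper alludes to but deliberately omits (deferring to the Lean source). The pointwise computation — $g_0^*(\ev_S(p))$ vanishes on $S'$ because $x_\mu = 0$ there, while $g_1^*(\ev_S(p))(x) = \ev_{S'}(p^t)(x)$ because $g_1$ forces the $\mu$-coordinate to $1$ and leaves the remaining entries of $p$ untouched — is the argument the paper has in mind, and you have correctly flagged the one point needing care, namely that $\mu$ being the first entry of the strictly decreasing sequence $p$ is what guarantees no other entry of $p$ is affected by $g_1$.
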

\begin{proof}
    This follows from considering all the cases given by Lemma \ref{Products.eval_eq}. We omit the proof here and refer to the Lean proof linked above.
\end{proof}

\begin{remark}
    If $p \in E(S)$ and $\mu \in p$, then $p$ satisfies the hypotheses of Lemma \ref{Products.max_eq_eval}.
\end{remark}

\begin{lemma}\href{https://github.com/leanprover-community/mathlib4/blob/ba9f2e5baab51310883778e1ea3b48772581521c/Mathlib/Topology/Category/Profinite/Nobeling.lean#L1608-L1678}{\faExternalLink*}
    If $p \in E(S)$ and $\mu \in p$, then $p^t \in E(S')$.
\end{lemma}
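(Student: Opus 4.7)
The plan is a proof by contradiction: I assume $p^t \notin E(S')$ and derive that $\ev_S(p) \in \Sigma_S(p)$, contradicting $p \in E(S)$. Since $\mu \in p$ and $p$ is strictly decreasing, $\mu$ is the first entry of $p$, so every entry of $p^t$ is $<\mu$. Suppose
\[
\ev_{S'}(p^t) = \sum_j c_j \ev_{S'}(q_j)
\]
with each $q_j \in P$ satisfying $q_j < p^t$. A short lex-order argument shows that the first entry of any nonempty $q_j$ is at most the first entry of $p^t$, and hence $< \mu$; therefore $\mu :: q_j$ is a valid element of $P$, and comparison of tails gives $\mu :: q_j < p$.

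Next, I would transport the relation from $S'$ back to $S$ using the exact top row of diagram~(\ref{comm_diagram}). Since $p$ and each $\mu :: q_j$ begin with $\mu$, Lemma~\ref{Products.max_eq_eval} yields $g(\ev_S(p)) = \ev_{S'}(p^t)$ and $g(\ev_S(\mu :: q_j)) = \ev_{S'}(q_j)$, so
\[
g\Bigl(\ev_S(p) - \sum_j c_j \ev_S(\mu :: q_j)\Bigr) = 0.
\]
By Lemma~\ref{succ_exact}, the bracketed term equals $\pi_\mu^*(h)$ for some $h \in C(S_\mu, \Z)$.

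Finally, I would expand $h$ using the spanning result of Subsection~\ref{subsec_span}, which applies since $S_\mu = \pi_\mu(S)$ is a closed subset of $\prod_{i \in I} \{0,1\}$: write $h = \sum_k d_k \ev_{S_\mu}(r_k)$ with $r_k \in E(S_\mu)$. By Remark~\ref{rem:prop_of_isGood} every entry of each $r_k$ lies below $\mu$, so Lemma~\ref{Products.evalFacProp} gives $\pi_\mu^*(\ev_{S_\mu}(r_k)) = \ev_S(r_k)$; and since $p$ begins with $\mu$ while $r_k$ has all entries $< \mu$ (or is empty), lex comparison yields $r_k < p$. Substituting back, $\ev_S(p)$ becomes a linear combination of $\ev_S(q)$ with $q < p$, the desired contradiction.

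The main obstacle I anticipate is the lex-order bookkeeping in both directions: verifying that prepending $\mu$ to any $q_j < p^t$ produces a valid element of $P$ which is strictly less than $p$, and that any $r \in P$ whose entries all lie below $\mu$ is automatically strictly less than $p$. These checks are elementary but form the crucial bridge between the inductive information at level $\mu$ — governing $S_\mu$ and $S'$ — and the desired conclusion at level $\mu+1$ on $S$.
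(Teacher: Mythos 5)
Your proof is correct and follows essentially the same route as the paper's: contrapose, write $\ev_{S'}(p^t)$ as a linear combination of $\ev_{S'}(q)$ with $q<p^t$, lift along $g$ using Lemma~\ref{Products.max_eq_eval}, apply exactness (Lemma~\ref{succ_exact}) to peel off a $\pi_\mu^*(h)$ term, expand $h$ over $E(S_\mu)$ (using that $C(S_\mu,\Z)$ is spanned by good products), and push everything back into $\Sigma_S(p)$ via the observation that both $\mu::q$ and $r_k$ are $<p$. The lex-order checks you flag as the "main obstacle" are exactly the bookkeeping points the paper also handles, and your sketch of why they hold is sound.
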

\begin{proof}
    Contraposing the statement, it suffices to show that if
    \[
        \ev_{S'}\left(p^t\right) \in \operatorname{Span}\left( \ev_{S'}\left(\{q \suchthat q < p^t \}\right)\right),
    \]
    then 
    \[
        \ev_S(p) \in \operatorname{Span}\left(\ev_S \left(\{(q) \suchthat q < p \}\right)\right).
    \]
    Given a 
    $q \in P$ such that 
    $i \in q$ implies 
    $i < \mu$, we denote by 
    $q^\mu \in P$ the sequence obtained by adding 
    $\mu$ at the front. Write 
    \[
        g(\ev_S(p)) = \ev_{S'}(p^t)
        = \sum_{q < p^t} n_q \ev_{S'}(q)
        = \sum_{q < p^t} n_q g(\ev_{S}(q^\mu)).
    \]
    Then by Lemma \ref{succ_exact}, there exists an $n \in C(S_\mu,\Z)$ such that 
    \[
        \ev_S(p) = \pi_\mu^*(n) + \sum_{q < p^t} n_q (\ev_{S}(q^\mu)).
    \]
    Now it suffices to show that each of the two terms in the sum above is in the span of $\{\ev_{S}(q) \suchthat q < p \}$. The latter term is because $q < p^t$ implies $q^\mu < p$. The former term is because we can write $n$ as a linear combination indexed by $E(S_\mu)$, and for $q \in E(S_\mu)$ we have $\pi_\mu^*\left(\ev_{S_\mu}(q)\right) = \ev_S(q)$ and $\mu \notin q$ so $q < p$.
    
\end{proof}

\begin{lemma}\label{GoodProducts.union_succ}\href{https://github.com/leanprover-community/mathlib4/blob/ba9f2e5baab51310883778e1ea3b48772581521c/Mathlib/Topology/Category/Profinite/Nobeling.lean#L1390-L1412}{\faExternalLink*}
    The set $E(S)$ is the disjoint union of $E(S_\mu)$ and 
    \[
        E'(S) = \{p \in E(S) \suchthat \mu \in p\}.
    \]
\end{lemma}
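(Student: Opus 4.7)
The statement has two parts---the equality $E(S) = E(S_\mu) \cup E'(S)$ and the disjointness of the two pieces---so I would dispatch the easy inclusion and the disjointness first, and then focus on the nontrivial direction.

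For disjointness, I would observe that any $p \in E(S_\mu)$ must have $\ev_{S_\mu}(p) \neq 0$ (otherwise $0 \in \Sigma_{S_\mu}(p)$ trivially and $p$ fails to be in $E(S_\mu)$). Since $S_\mu = \pi_\mu(S)$ consists of points $y$ with $y_i = 0$ for every $i \geq \mu$, the nonvanishing of $\ev_{S_\mu}(p)$ forces every entry of $p$ to be strictly less than $\mu$, so in particular $\mu \notin p$ and $p \notin E'(S)$. For the forward inclusion $E(S_\mu) \cup E'(S) \subseteq E(S)$, the containment $E'(S) \subseteq E(S)$ is immediate from the definition of $E'(S)$, and $E(S_\mu) \subseteq E(S_{\mu+1}) = E(S)$ is exactly Lemma \ref{good_mono} applied with $\mu' = \mu$ and outer ordinal $\mu+1$.

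The main step is to show that any $p \in E(S)$ with $\mu \notin p$ lies in $E(S_\mu)$. The same zero-vanishing argument applied to $S = S_{\mu+1}$ shows that every entry of $p$ must be $\leq \mu$, and combined with $\mu \notin p$, every entry is strictly below $\mu$; hence Lemma \ref{Products.evalFacProp} gives $\pi_\mu^*(\ev_{S_\mu}(p)) = \ev_S(p)$. I would then argue by contraposition: assuming $\ev_{S_\mu}(p) = \sum_{q < p} c_q\,\ev_{S_\mu}(q)$, any $q$ with an entry $\geq \mu$ contributes $\ev_{S_\mu}(q) = 0$ and drops out, while for every surviving $q$ Lemma \ref{Products.evalFacProp} yields $\pi_\mu^*(\ev_{S_\mu}(q)) = \ev_S(q)$. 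Applying $\pi_\mu^*$ to the linear combination then gives $\ev_S(p) \in \Sigma_S(p)$, contradicting $p \in E(S)$. The only delicate point, and what I expect to be the main obstacle, is recognising that the $q$ appearing in the spanning relation need not a priori have all entries below $\mu$---but the offending terms vanish automatically on $S_\mu$, which is what lets the relation transport cleanly across $\pi_\mu^*$.
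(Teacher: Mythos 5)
Your proof is correct and follows essentially the same route as the paper: disjointness and the easy inclusion come from Lemma~\ref{good_mono} together with the observation (Remark~\ref{rem:prop_of_isGood}) that elements of $E(S_\mu)$ have all entries below $\mu$, and the nontrivial inclusion is obtained by pushing a hypothetical spanning relation for $\ev_{S_\mu}(p)$ through the injective map $\pi_\mu^*$. One small remark: the ``delicate point'' you flag does not in fact arise, because if $p$ has all entries $< \mu$ and $q < p$ is a strictly decreasing sequence, then the first (hence largest) entry of $q$ is at most the first entry of $p$, so every $q$ occurring in the spanning relation already has all entries $< \mu$ and no term needs to be discarded.
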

\begin{proof}
    We already know by Lemma \ref{good_mono} that $E(S_\mu) \subseteq E(S)$. Also, as noted in Remark \ref{rem:prop_of_isGood}, if $p \in E(S_\mu)$ then all elements of $p$ are $<\mu$ and hence $p \notin E'(S)$. Now it suffices to show that if $p \in E(S) \setminus E'(S)$, then $p \in E(S_\mu)$. 
    
    Since $p \in E(S) \setminus E'(S)$, every $i \in p$ satisfies $i < \mu$. We have 
    \[
        \ev_{S}(p) = \pi_{\mu}^*(\ev_{S_{\mu}}(p))
    \] 
    and 
    \[
        \ev_{S} \left( \{q \in P \suchthat q < p\} \right) 
        = \pi_{\mu}^* \left( \ev_{S_{\mu}} \left( \{q \in P \suchthat q < p\} \right) \right)
    \]
    so if $\ev_{S_{\mu}}(p)$ is in the span of 
    \[
        \ev_{S_{\mu}} \left( \{q \in P \suchthat q < p\} \right), 
    \]
    then  $\ev_{S}(p)$ is in the span of 
    \[
        \ev_{S} \left( \{q \in P \suchthat q < p\} \right), 
    \]
    contradicting the fact that $p \in E(S)$. 
\end{proof}

The above lemmas prove all the claims made at the beginning of this section, concluding the inductive proof. \href{https://github.com/leanprover-community/mathlib4/blob/ba9f2e5baab51310883778e1ea3b48772581521c/Mathlib/Topology/Category/Profinite/Nobeling.lean#L1751-L1762}{\faExternalLink*}

\section{The formalisation}\label{sec:formalisation}

First a note on terminology: in the mathematical exposition of the proof in section \ref{sec:theorem}, we have talked about continuous maps from $S$ to $\Z$. Since $\Z$ is discrete, these are the same as the locally constant maps. The statement we have formalised is Listing \ref{list:nobeling}.

\begin{lstlisting}[caption={Nöbeling's theorem}, label=list:nobeling]
instance LocallyConstant.freeOfProfinite (S : Profinite.{u}) :
    Module.Free ℤ (LocallyConstant S ℤ)
\end{lstlisting}

which says that the $\Z$-module of locally constant maps from $S$ to $\Z$ is free. When talking about locally constant maps, one does not have to specify a topology on the target, which is slightly more convenient when working in a proof assistant.

The actual proof is about closed subsets of the product $ \prod_{i \in I}\{0,1\}$, which is of course the same thing as the space of functions $I \to \{0,1\}$. We implement it as the type \lean{I → Bool}, where \lean{Bool} is the type with two elements called \lean{true} and \lean{false}. This is the canonical choice for a two-element discrete topological space in Mathlib. 

\subsection{The implementation of $P$ and $E(S)$}\label{subsec:P_and_E}

We implemented the set $P$ as the type \lean{Products I} defined as
\begin{lstlisting}
def Products (I : Type*) [LinearOrder I] := {l : List I // l.Chain' (·>·)}
\end{lstlisting}

The predicate \lean{l.Chain' (·>·)} means that adjacent elements of the list \lean{l} are related by ``$>$''. We define the evaluation $\ev_S$ of products as 
\begin{lstlisting}
def Products.eval (S : Set (I → Bool)) (l : Products I) : 
    LocallyConstant S ℤ := (l.val.map (e S)).prod
\end{lstlisting}
where \lean{l.val.map (e S)} is the list of $e_{S,i}$ for $i$ in the list \lean{l.val}, and \lean{List.prod} is the product of the elements of a list. 

We define a predicate on \lean{Products}
\begin{lstlisting}
def Products.isGood (S : Set (I → Bool)) (l : Products I) : Prop :=
  l.eval S ∉ Submodule.span ℤ ((Products.eval S) '' {m | m < l})
\end{lstlisting}
and then the set $E(S)$ becomes 
\begin{lstlisting}
def GoodProducts (S : Set (I → Bool)) : Set (Products I) :=
  {l : Products I | l.isGood S}
\end{lstlisting}

It is slightly painful to prove completely trivial lemmas like \ref{Products.eval_eq} and its corollary \ref{Products.evalFacProp} in Lean. Indeed, these results are not mentioned in the proof of \cite[Theorem 5.4]{condensed}. Although trivial, they are used often in the proof of the theorem and hence very important to making the proof work. Reading an informal proof of this theorem, one might never realise that these trivialities are used. This is an example of a useful by-product of formalisation; more clarity of exposition.

\subsection{Ordinal induction}\label{subsec:ordinal}

When formalising an inductive proof of any kind, one has to be very precise about what statement one wants to prove by induction. This is almost never the case in traditional mathematics texts. For example, the proof of \cite[Theorem 5.4]{condensed} claims to be proving by induction that $E(S)$ is a basis of $C(S,\Z)$, not just that it is linearly independent. Furthermore, the set $I$ is not fixed throughout the inductive proof which makes it somewhat unclear what the inductive hypothesis actually says. Working inside the topological space $ \prod_{i \in I} \{0,1\}$ for a fixed set $I$ throughout the proof was convenient in the successor step. This avoided problems that are solved by abuse of notation in informal texts, such as regarding a set as the same thing as its image under a continuous embedding.

The statement of the induction principle for ordinals in Mathlib is the following\footnote{We have altered the notation slightly to match the notation in this paper.}:

\begin{lstlisting}
def Ordinal.limitRecOn {Q : Ordinal → Sort _} (o : Ordinal) 
    (H₁ : Q 0) 
    (H₂ : ∀ o, Q o → Q (succ o)) 
    (H₃ : ∀ o, IsLimit o → (∀ o' < o, Q o')→ Q o) : 
    Q o
\end{lstlisting}

In our setting, given a map \lean{Q : Ordinal → Prop} (in other words, a \emph{predicate on ordinals})\footnote{\lean{Prop} is \lean{Sort 0}}, we can prove $Q(\mu)$ for any ordinal $\mu$ if three things hold:
\begin{itemize}
    \item The zero case: $Q(0)$ holds.
    \item The successor case: for all ordinals $\lambda$, $Q(\lambda)$ implies $Q(\lambda + 1)$.
    \item The limit case: for every limit ordinal $\lambda$, if $Q(\lambda')$ holds for every $\lambda' < \lambda$, then $Q(\lambda)$ holds.
\end{itemize}
Finding the correct predicate $Q$ on ordinals was essential to the success of this project: 

\begin{lstlisting}
def Q (I : Type*) [LinearOrder I] [IsWellOrder I (·<·)] (o : Ordinal) : Prop :=
  o ≤ Ordinal.type (·<· : I → I → Prop) → 
    (∀ (S : Set (I → Bool)), IsClosed S → contained S o → 
      LinearIndependent ℤ (GoodProducts.eval S))
\end{lstlisting}

The inequality 
\begin{lstlisting}
o ≤ Ordinal.type (·<· : I → I → Prop)
\end{lstlisting}
means that $o \leq I$ when $I$ is considered as an ordinal, and the proposition \lean{contained S o} is defined as 
\begin{lstlisting}
def contained {I : Type*} [LinearOrder I] [IsWellOrder I (·<·)] 
    (S : Set (I → Bool)) (o : Ordinal) : Prop := 
  ∀ f, f ∈ S → ∀ (i : I), f i = true → ord I i < o
\end{lstlisting}
and \lean{ord I i} is an abbreviation for 
\begin{lstlisting}
Ordinal.typein (·<· : I → I → Prop) i
\end{lstlisting}
i.e. the element $i \in I$ considered as an ordinal. The conclusion 
\begin{lstlisting}
LinearIndependent ℤ (GoodProducts.eval S)
\end{lstlisting}
means that the map $\ev_S : E(S) \to C(S,\Z)$ is linearly independent.

As is often the case, this is quite an involved statement that we are proving by induction, and when writing informally, mathematicians wouldn't bother to specify the map \lean{Q : Ordinal → Prop} explicitly.

\subsection{Piecewise defined locally constant maps}\label{subsec:piecewise}

In the proof of Lemma \ref{succ_exact}, we defined a locally constant map $S_\mu \to \Z$ by giving locally constant maps from $S_0$ and $\pi_\mu(S_1)$ that agreed on the intersection, and noting that this gives a locally constant map from the union which is equal to $S_\mu$. To do this in Lean, the following definition was added to Mathlib \href{https://github.com/leanprover-community/mathlib4/blob/ba9f2e5baab51310883778e1ea3b48772581521c/Mathlib/Topology/LocallyConstant/Basic.lean#L613-L634}{\faExternalLink*}:

\begin{lstlisting}
def LocallyConstant.piecewise {X Z : Type*} [TopologicalSpace X] {C₁ C₂ : Set X}
    (h₁ : IsClosed C₁) (h₂ : IsClosed C₂) (h : C₁ ∪ C₂ = Set.univ)
    (f : LocallyConstant C₁ Z) (g : LocallyConstant C₂ Z) 
    (hfg : ∀ (x : X) (hx : x ∈ C₁ ∩ C₂), f ⟨x, hx.1⟩ = g ⟨x, hx.2⟩)
    [∀ j, Decidable (j ∈ C₁)] : LocallyConstant X Z where
  toFun i := if hi : i ∈ C₁ then f ⟨i, hi⟩
    else g ⟨i, (compl_subset_iff_union.mpr h) hi⟩
  isLocallyConstant := omitted
\end{lstlisting}
It says that given locally constant maps $f$ and $g$ defined respectively on closed subsets $C_1$ and $C_2$ which together cover the space $X$, such that $f$ and $g$ agree on $C_1 \cap C_2$, we get a locally constant map defined on all of $X$. This seems like exactly what we need in the above-mentioned proof. However, there is a subtlety, in that because of how the rest of the inductive proof is structured, we want the sets $S_0$, $\pi_\mu(S_1)$ and $S_\mu$ all to be considered as subsets of the underlying topological space $ \prod_{i \in I} \{0,1\}$. To use \lean{LocallyConstant.piecewise}, we would have to consider $S_\mu$ as the underlying topological space and $S_0$ and $\pi_\mu(S_1)$ as subsets of it. This is possible and is what was done initially, but a cleaner solution is to define a variant of \lean{LocallyConstant.piecewise}:
\begin{lstlisting}
def LocallyConstant.piecewise' {X Z : Type*} [TopologicalSpace X] 
    {C₀ C₁ C₂ : Set X}
    (h₀ : C₀ ⊆ C₁ ∪ C₂) (h₁ : IsClosed C₁) (h₂ : IsClosed C₂) 
    (f₁ : LocallyConstant C₁ Z) (f₂ : LocallyConstant C₂ Z) 
    [DecidablePred (· ∈ C₁)]
    (hf : ∀ x (hx : x ∈ C₁ ∩ C₂), f₁ ⟨x, hx.1⟩  = f₂ ⟨x, hx.2⟩) : 
    LocallyConstant C₀ Z
\end{lstlisting}
which satisfies the equations 
\begin{lstlisting}
lemma LocallyConstant.piecewise'_apply_left {X Z : Type*} [TopologicalSpace X]
    {C₀ C₁ C₂ : Set X} (h₀ : C₀ ⊆ C₁ ∪ C₂) 
    (h₁ : IsClosed C₁) (h₂ : IsClosed C₂) 
    (f₁ : LocallyConstant C₁ Z) (f₂ : LocallyConstant C₂ Z)
    [DecidablePred (· ∈ C₁)] 
    (hf : ∀ x (hx : x ∈ C₁ ∩ C₂), f₁ ⟨x, hx.1⟩ = f₂ ⟨x, hx.2⟩)
    (x : C₀) (hx : x.val ∈ C₁) :
    piecewise' h₀ h₁ h₂ f₁ f₂ hf x = f₁ ⟨x.val, hx⟩
\end{lstlisting}
and
\begin{lstlisting}
lemma LocallyConstant.piecewise'_apply_right {X Z : Type*} [TopologicalSpace X] 
    {C₀ C₁ C₂ : Set X} (h₀ : C₀ ⊆ C₁ ∪ C₂) 
    (h₁ : IsClosed C₁) (h₂ : IsClosed C₂) 
    (f₁ : LocallyConstant C₁ Z) (f₂ : LocallyConstant C₂ Z)
    [DecidablePred (· ∈ C₁)] 
    (hf : ∀ x (hx : x ∈ C₁ ∩ C₂), f₁ ⟨x, hx.1⟩ = f₂ ⟨x, hx.2⟩)
    (x : C₀) (hx : x.val ∈ C₂) :
    piecewise' h₀ h₁ h₂ f₁ f₂ hf x = f₂ ⟨x.val, hx⟩
\end{lstlisting}
Here $C_0, C_1,$ and $C_2$ are subsets of the same underlying topological space $X$; $C_1$ and $C_2$ are closed sets covering $C_0$, and $f_1$ and $f_2$ are locally constant maps defined on $C_1$ and $C_2$ respectively, such that $f_1$ and $f_2$ agree on the intersection. This fits the application perfectly and shortened the proof of Lemma \ref{succ_exact} considerably. Subtleties like this come up frequently, and can stall the formalisation process, especially when formalising general topology. When formalising Gleason's theorem \href{https://github.com/leanprover-community/mathlib4/pull/5634}{\faExternalLink*} (another result in general topology relevant to condensed mathematics, see \cite[Definition 2.4]{condensed} and \cite{gleason}), similar subtleties arose about changing the ``underlying topological space'' to a subset of the previous underlying topological space. 

The phenomenon that it is sometimes more convenient to formalise the definition of an object rather as a subobject of some bigger object is, of course, well known. It was noted in the context of group theory by Gonthier et al.~during the formalisation of the odd order theorem, see \cite[Section 3.3]{gonthier}.

\subsection{Reflections on the proof}

The informal proof in \cite{condensed} is about half a page; 21 lines of text. Depending on how one counts (i.e. what parts of the code count as part of the proof and not just prerequisites), the formalised proof is somewhere between 1500 and 3000 lines of Lean code. A big part of the difference is because of omissions in the proof in \cite{condensed}. 

A more fair comparison would be with the entirety of section \ref{sec:theorem} in this paper, which is an account of all the mathematical contents of the formalised proof. Still, there is quite a big difference, which is mostly explained by the pedantry of proof assistants, as discussed in subsections \ref{subsec:P_and_E}, \ref{subsec:ordinal}, and \ref{subsec:piecewise}.  

\subsection{Mathlib integration}\label{sec:mathlib}

As discussed in recent papers by Nash \cite{oliver} and Best et al. \cite{riccardo}, when formalising mathematics in Lean, it is desirable to develop as much as possible directly against Mathlib. Otherwise, the code risks going stale and unusable, while if integrated into Mathlib it becomes part of a library that is continuously maintained. 

The development of this project took place on a branch of Mathlib, all code being written in new files. This was a good workflow to get the formalisation done as quickly as possible, because if new code is put in the ``correct places'' immediately, one has to rebuild part of Mathlib to be able to use that code in other places, which can be a slow process if changes are made deep in the import hierarchy. 

The proof of Nöbeling's theorem described in this paper has now been fully integrated into Mathlib. An unusually large portion of the code was of no independent interest, which resulted in a pull request adding one huge file, which Johan Commelin and Kevin Buzzard kindly reviewed in great detail, improving both the style and performance of the code.

\section{Towards condensed mathematics in Mathlib}\label{sec:condensed_mathlib}
The history of condensed mathematics in Lean started with the \emph{Liquid Tensor Experiment} (LTE) \cite{LTE, LTE_challenge}. This is an example of a formalisation project that was in some sense too big to be integrated into Mathlib. Nevertheless, it was a big success in that it demonstrated the capabilities of Lean and its community by fully formalising the complicated proof of a highly nontrivial theorem about so-called liquid modules. Moreover, it provided a setting in which to experiment with condensed mathematics and find the best way to do homological algebra in Lean. As mentioned above, the goal of LTE was to formalise one specialised theorem. This is somewhat orthogonal to the goal of Mathlib which is to build a coherent, unified library of formalised mathematics. It is thus understandable that the contributors of LTE chose to focus on completing the task at hand instead of spending time on moving some parts of the code to Mathlib. Now that both LTE and the port of Mathlib to Lean 4 have been completed, we are seeing some important parts of LTE being integrated into Mathlib. 

The definition \href{https://github.com/leanprover-community/mathlib4/blob/ba9f2e5baab51310883778e1ea3b48772581521c/Mathlib/Condensed/Basic.lean#L46-L47}{\faExternalLink*} of a condensed object was recently added to Mathlib. During a masterclass on formalisation of condensed mathematics organised in Copenhagen in June 2023, participants collaborated, under the guidance of Kevin Buzzard and Adam Topaz, on formalising as much condensed mathematics as possible in one week (all development took place in Lean 4 and the goal was to write material for Mathlib). The code can be found in the masterclass GitHub repository \href{https://github.com/adamtopaz/CopenhagenMasterclass2023}{\faExternalLink*} and much of it has already made it into Mathlib. 

Profinite spaces form a rich category of topological spaces and there is more work other than Nöbeling's theorem to be done in Mathlib. Being the building blocks of condensed sets, it is important to develop a good API for profinite spaces in Mathlib. There, profinite spaces are defined as totally disconnected compact Hausdorff spaces. It is proved \href{https://github.com/leanprover-community/mathlib4/blob/ba9f2e5baab51310883778e1ea3b48772581521c/Mathlib/Topology/Category/Profinite/AsLimit.lean#L104-L105}{\faExternalLink*} that every profinite space can be expressed as a cofiltered limit (more precisely, over the poset of its discrete quotients). It is also proved \href{https://github.com/leanprover-community/mathlib4/blob/ba9f2e5baab51310883778e1ea3b48772581521c/Mathlib/Topology/Category/Profinite/Basic.lean#L306-L307}{\faExternalLink*} that the category of profinite spaces has all limits and that the forgetful functor to topological spaces preserves them \href{https://github.com/leanprover-community/mathlib4/blob/ba9f2e5baab51310883778e1ea3b48772581521c/Mathlib/Topology/Category/Profinite/Basic.lean#L302-L303}{\faExternalLink*}. From this we can extract the following useful theorem:

\begin{theorem}\label{profinite-profinite}
    A topological space is profinite if and only if it can be written as a cofiltered limit of finite discrete spaces. 
\end{theorem}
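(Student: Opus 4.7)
The plan is to derive both directions from the Mathlib results cited in the paragraph immediately above the theorem, so essentially no new mathematics is required --- only careful bookkeeping about categories and diagrams.

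For the forward direction, I would take a profinite space $X$ and invoke the cited fact that $X$ is canonically homeomorphic to the cofiltered limit of the diagram indexed by the poset of its discrete quotients (ordered by refinement). Because $X$ is compact, every discrete quotient is automatically finite, so this diagram takes values in finite discrete spaces. It remains only to note that the forgetful functor from $\mathrm{Profinite}$ to $\mathrm{Top}$ preserves this limit (cited above), so the homeomorphism also holds in $\mathrm{Top}$, exhibiting $X$ as a cofiltered limit of finite discrete spaces in the category of topological spaces.

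For the reverse direction, I would start with a cofiltered diagram $F \colon \mathcal{C} \to \mathrm{Top}$ whose values are finite discrete spaces, together with a homeomorphism $X \cong \varprojlim F$. Since every finite discrete space is profinite, $F$ factors through $\mathrm{Profinite} \hookrightarrow \mathrm{Top}$ as a diagram $\widetilde{F} \colon \mathcal{C} \to \mathrm{Profinite}$. By the cited existence of limits in $\mathrm{Profinite}$, the limit $Y := \varprojlim \widetilde{F}$ exists, and by preservation of limits under the forgetful functor its underlying topological space is homeomorphic to $\varprojlim F$, and hence to $X$. Thus $X$ is homeomorphic to the underlying space of an object of $\mathrm{Profinite}$ and is therefore profinite.

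The whole argument is a short chaining of three existing Mathlib results, so the main obstacle is not mathematical but organisational: in each direction I must be careful about which category the limit lives in and invoke the correct preservation lemma, rather than silently identifying limits across categories --- exactly the kind of abuse of notation the paper has already flagged as a common source of friction during formalisation. A secondary technical point is that in the forward direction one needs to know that the indexing category of discrete quotients is genuinely cofiltered (equivalently, directed under refinement), which amounts to the elementary observation that the common refinement of two discrete quotients of a compact Hausdorff space is again a discrete quotient.
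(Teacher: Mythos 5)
Your proposal is correct and follows exactly the route the paper itself indicates: the paper offers no detailed proof, simply citing the three Mathlib facts (profinite spaces as cofiltered limits over their discrete quotients, existence of limits in $\mathrm{Profinite}$, and preservation of limits by the forgetful functor to $\mathrm{Top}$) and saying the theorem can be ``extracted'' from them. Your write-up is just the natural unfolding of that extraction, with the same division into forward and reverse directions and the same care about which category each limit lives in.
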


The story about profinite spaces as limits does not end there, though. Sometimes it is not enough to know just that some profinite space \emph{can} be written as \emph{a} limit, but rather that there is a specific limit formula for it. Lemma \ref{limit} gives one specific way of writing a compact subset of a product as a cofiltered limit, which can be useful. Another example can be extracted from \cite{discrete}. This is the fact that the identity functor on the category of profinite spaces is right Kan extended from the inclusion functor from finite sets to profinite spaces along itself. This gives another limit formula for profinite spaces, coming from the limit formula for right Kan extensions, and is useful when formalising the definition of solid abelian groups \cite{solid}. 

It can also be useful to regard the category of profinite spaces as the pro-category of the category of finite sets. The definition of pro-categories and this equivalence of categories would make for a nice formalisation project and be a welcome contribution to Mathlib.

\section{Conclusion and future work}\label{sec:conclusion}

By formalising Nöbeling's theorem, we have illustrated that the induction principle for ordinals in Mathlib can be used to prove nontrivial theorems outside the theory ordinals themselves. Another contribution is the detailed proof given in section \ref{sec:theorem}, and of course as mentioned before, it is an important step for the formalisation of condensed mathematics to continue. 

A natural next step in the formalisation of the theory of solid abelian groups is to port the code in \cite{discrete, solid} to Lean 4 and get it into Mathlib. Then one can put together the discreteness characterisation and Nöbeling's theorem to prove the structural results about $\Z[S]^\solid$, which would lead us one step closer to an example of a nontrivial solid abelian group in Mathlib.

More broadly, it is important to continue moving as much as possible of the existing Lean code about condensed mathematics (from the LTE and the Copenhagen masterclass) into Mathlib. 

\bibliography{references}

\end{document}